\newtheorem{theorem}{Theorem}[section]
\newtheorem{lemma}[theorem]{Lemma}
\newtheorem{proposition}[theorem]{Proposition}
\newtheorem{corollary}[theorem]{Corollary}
\numberwithin{equation}{section}
\begin{document}

\baselineskip=15pt

\title[Logarithmic connections on bundles on a Riemann surface]{Logarithmic connections on 
principal bundles over a Riemann surface}

\author[I. Biswas]{Indranil Biswas}

\address{School of Mathematics, Tata Institute of Fundamental
Research, Homi Bhabha Road, Mumbai 400005, India}

\email{indranil@math.tifr.res.in}

\author[A. Dan]{Ananyo Dan}

\address{BCAM -- Basque Centre for Applied Mathematics, Alameda de Mazarredo 14,
48009 Bilbao, Spain} 

\email{adan@bcamath.org}

\author[A. Paul]{Arjun Paul} 

\address{School of Mathematics, Tata Institute of Fundamental 
Research, Homi Bhabha Road, Mumbai 400005, India} 

\email{apmath90@math.tifr.res.in}

\author[A. Saha]{Arideep Saha}

\address{School of Mathematics, Tata Institute of Fundamental
Research, Homi Bhabha Road, Mumbai 400005, India}

\email{arideep@math.tifr.res.in}

\subjclass[2010]{53B15, 14H60, 32A27.}

\keywords{Logarithmic connection, residue, automorphism, maximal tori.}

\begin{abstract}
Let $E_G$ be a holomorphic principal $G$--bundle on a compact connected Riemann 
surface $X$, where $G$ is a connected reductive complex affine algebraic group. Fix a 
finite subset $D\, \subset\, X$, and for each $x\,\in\, D$ fix $w_x\, \in\, 
\text{ad}(E_G)_x$. Let $T$ be a maximal torus in the group of all holomorphic 
automorphisms of $E_G$. We give a necessary and sufficient condition for the existence 
of a $T$--invariant logarithmic connection on $E_G$ singular over $D$ such that the 
residue over each $x\, \in\, D$ is $w_x$. We also give a necessary and sufficient 
condition for the existence of a logarithmic connection on $E_G$ singular over $D$ 
such that the residue over each $x\, \in\, D$ is $w_x$, under the assumption that 
each $w_x$ is $T$--rigid.
\end{abstract}

\maketitle

\section{Introduction}

Let $X$ be a compact connected Riemann surface. Given a holomorphic vector bundle $E$ on $X$, a 
theorem of Weil and Atiyah says that $E$ admits a holomorphic connection if and only if the degree 
of every indecomposable component of $E$ is zero (see \cite{We}, \cite{At}). Now let $G$ be a 
connected complex affine algebraic group and $E_G$ a holomorphic principal $G$--bundle on $X$. Then 
$E_G$ admits a holomorphic connection if and only if for every holomorphic reduction of structure 
group $E_H\, \subset\, E_G$, where $H$ is a Levi factor of some parabolic subgroup of $G$, and for 
every holomorphic character $\chi$ of $H$, the degree of the associated line bundle
\begin{equation}\label{ese1}
E_H(\chi)\,=\, E_H\times^\chi {\mathbb C}\,\longrightarrow\, X
\end{equation}
is zero \cite{AB}. Our aim here is to investigate the logarithmic 
connections on $E_G$ with fixed residues, where $(G,\, E_G)$ is as above. More precisely, fix a 
finite subset $D\, \subset\, X$ and also fix $$ w_x\, \in\, \text{ad}(E_G)_x $$ for each $x\, \in\, 
D$, where $\text{ad}(E_G)$ is the adjoint vector bundle for $E_G$. We investigate the existence of 
logarithmic connections on $E_G$ singular over $D$ such that residue is $w_x$ for every $x\,\in\, 
D$.

Let $\text{Aut}(E_G)$ denote the group of all holomorphic automorphisms of $E_G$; it is a complex
affine algebraic group. Fix a maximal torus
$$
T\, \subset\, \text{Aut}(E_G)\, .
$$
This choice produces a Levi factor $H$ of a parabolic subgroup of $G$ as well as a holomorphic
reduction of structure group $E_H\, \subset\, E_G$ to $H$ \cite{BBN}. This pair $(H,\, E_H)$ is
determined by $T$ uniquely up to a holomorphic automorphism of $E_G$.

The group $\text{Aut}(E_G)$ acts on the vector bundle $\text{ad}(E_G)$. An element of
$\text{ad}(E_G)$ will be called $T$--rigid if it is fixed by the action of $T$; some
examples are given in Section \ref{ex}.

We prove the following (see Theorem \ref{thm2}):

\begin{theorem}\label{intt1}
The following two are equivalent:
\begin{enumerate}
\item There is a $T$--invariant logarithmic connection
on $E_G$ singular over $D$ with residue $w_x$ at every $x\, \in\, D$.

\item The element $w_x$ is $T$--rigid for each $x\,\in\, D$, and
$$
{\rm degree}(E_H(\chi))+\sum_{x\in D} d\chi(w_x)\,=\, 0
$$
for every holomorphic character $\chi$ of $H$, where $E_H(\chi)$ is the line bundle
in \eqref{ese1}, and $d\chi\,:\, {\rm Lie}(H)\,\longrightarrow\,
{\mathbb C}$ is the homomorphism of Lie algebras corresponding to $\chi$.
\end{enumerate}
\end{theorem}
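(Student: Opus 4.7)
The plan is to prove the two implications separately, both mediated by the reduction $E_H \subset E_G$ associated to $T$. The preliminary observation I shall use, which follows from the structural results in \cite{BBN}, is that $T$--invariant logarithmic connections on $E_G$ correspond bijectively to logarithmic connections on $E_H$, via extension of structure group along $H \hookrightarrow G$. Under this correspondence residues are preserved, and (since $T$ acts on $E_H$ through a suitably regular subtorus of the center $Z(H)$) the $T$--fixed subspace of $\text{ad}(E_G)_x$ is identified with $\text{ad}(E_H)_x$.

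For the implication $(1) \Rightarrow (2)$, let $\nabla$ be a $T$--invariant logarithmic connection on $E_G$ with residues $\{w_x\}$. Functoriality of the residue map under the $\text{Aut}(E_G)$--action forces each $w_x$ to be $T$--fixed, hence $T$--rigid. Transport $\nabla$ to the corresponding logarithmic connection $\nabla_H$ on $E_H$; the residues are unchanged. For any holomorphic character $\chi$ of $H$, extension of scalars produces a logarithmic connection on the line bundle $E_H(\chi)$ whose residue at $x$ is $d\chi(w_x)$. The classical identity $\deg(L) + \sum_{x\in D} \text{res}_x = 0$ for a logarithmic connection on a line bundle $L$ on a compact Riemann surface, applied to $L = E_H(\chi)$, yields the degree equation in (2).

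For the converse $(2) \Rightarrow (1)$, the $T$--rigidity of each $w_x$ places it in $\text{ad}(E_H)_x$, so by the preliminary bijection it suffices to construct a logarithmic connection $\nabla_H$ on $E_H$ with residues $\{w_x\}$. Since $H$ is reductive (a Levi of a parabolic in $G$), I propose to invoke a logarithmic analog of the Atiyah-Biswas criterion \cite{AB}: such a $\nabla_H$ exists iff, for every reduction of $E_H$ to a Levi $L$ of a parabolic in $H$ and every holomorphic character $\chi'$ of $L$, the analogous degree equation $\deg(E_L(\chi')) + \sum_x d\chi'(w_x) = 0$ holds. The hypothesis supplies this directly for characters of $H$ itself. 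The essential additional point is that no further non--trivial reduction of $E_H$ contributes a new independent constraint: any such reduction together with a non--trivial character of the smaller Levi would generate a positive--dimensional subtorus of $\text{Aut}(E_H) \subseteq \text{Aut}(E_G)$ commuting with $T$ but not contained in $T$, contradicting the maximality of $T$ in $\text{Aut}(E_G)$.

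The main obstacle is this last step: both formulating the logarithmic version of the Atiyah-Biswas criterion precisely for the reductive group $H$, and using the maximality of $T$ together with the structural description of $\text{Aut}(E_H)$ from \cite{BBN} to rule out obstructive further reductions of $E_H$. Once these are in place, the hypothesized identities for characters of $H$ alone suffice to produce $\nabla_H$, and hence, via the preliminary bijection, the desired $T$--invariant logarithmic connection on $E_G$.
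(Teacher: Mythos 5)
Your forward implication $(1)\Rightarrow(2)$ is essentially the paper's argument: $T$--invariance forces the residues to be $T$--fixed, and the degree identity follows by pushing the connection down to $E_H$ and then out along each character $\chi$, using the classical residue formula for line bundles. One caveat: you assert at the outset a \emph{bijection} between $T$--invariant logarithmic connections on $E_G$ and logarithmic connections on $E_H$, but you never justify the direction you actually use here (from $E_G$ down to $E_H$). The paper does this without any invariance hypothesis, via Lemma \ref{le-2}: reductivity of $H$ gives an $H$--module retraction $\mathfrak g \to \mathfrak h$, hence a retraction $\mathrm{ad}(E_G)\to \mathrm{ad}(E_H)$, which transports the connection and its residues. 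You should supply that (or an argument that a $T$--invariant connection preserves the reduction $E_H$).

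The converse $(2)\Rightarrow(1)$ has a genuine gap, and you have in fact located it yourself: the ``logarithmic analog of the Atiyah--Biswas criterion'' that you invoke is not available in the literature and is not a black box you can call --- it is, in substance, the theorem you are being asked to prove. (The holomorphic criterion of \cite{AB} does not formally imply its logarithmic version with prescribed residues; the whole difficulty is the prescribed-residue constraint.) The content you are missing is the paper's Proposition \ref{prop1}. There the group $H$ is split up to isogeny as $S\times Z$ with $S=H/T_G$ semisimple and $Z=H/[H,H]$ a torus, and $\mathrm{ad}(E_H)=\mathrm{ad}(E_S)\oplus\mathrm{ad}(E_Z)$. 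The $Z$--part is handled exactly by your degree identities via the line-bundle residue formula. The $S$--part is where the real work is: the obstruction is an extension class $\beta\in H^1(X,\mathrm{ad}(E_S)\otimes K_X)$, identified with an element of $H^0(X,\mathrm{ad}(E_S))^*$ using the Killing form and Serre duality; maximality of $T$ in $\mathrm{Aut}(E_G)^0$ forces the maximal torus of $\mathrm{Aut}(E_S)^0$ to be trivial, hence every global section $\gamma$ of $\mathrm{ad}(E_S)$ is nilpotent, and the Azad--Biswas parabolic-reduction argument then shows $\int_X\kappa(\widehat\beta,\gamma)=0$ for each such $\gamma$, so $\beta=0$ and the connection on $E_S$ exists with \emph{zero} further conditions. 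Note in particular that the maximality of $T$ is used to kill the semisimple obstruction outright, not (as in your sketch) to argue that further Levi reductions of $E_H$ impose no new character conditions; your proposed mechanism (``a smaller Levi with a nontrivial character would generate a torus commuting with $T$'') is at best a heuristic for the minimality of $E_H$ from \cite{BBN}, and even granting it, it does not produce the connection on the semisimple quotient. Finally, once the connection on $E_H$ is built this way, its $T$--invariance is automatic because $T$ is the image of the central torus $T_G\subset H$, which acts trivially on the space of connections on $E_H$; that is the step that upgrades Theorem \ref{thm1} to the $T$--invariant statement, and it is absent from your write-up.
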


The Lie algebra $\mathbb C$ being abelian the homomorphism $d\chi$ factors through the
conjugacy classes in $\text{Lie}(H)$, so it can be evaluated on the elements of
$\text{ad}(E_H)$.

We also the prove the following (see Theorem \ref{thm1}):

\begin{theorem}\label{intt2}
Assume that each $w_x$ is $T$--rigid. Then there is a logarithmic connection on $E_G$ singular
over $D$, with residue $w_x$ at every $x\, \in\, D$, if and only if
$$
{\rm degree}(E_H(\chi))+\sum_{x\in D} d\chi(w_x)\,=\, 0\, ,
$$
where $E_H(\chi)$ and $d\chi(w_x)$ are as in Theorem \ref{intt1}.
\end{theorem}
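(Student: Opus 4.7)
The plan is to reduce Theorem \ref{intt2} to Theorem \ref{intt1} via an averaging argument, exploiting the $T$-rigidity hypothesis to promote an arbitrary logarithmic connection to a $T$-invariant one.

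For the ``if'' direction, assume the character degree condition holds and each $w_x$ is $T$-rigid. Then the hypotheses of part (2) of Theorem \ref{intt1} are satisfied, so Theorem \ref{intt1} produces a $T$-invariant logarithmic connection on $E_G$ singular over $D$ with residue $w_x$ at each $x\in D$. Such a connection is, \emph{a fortiori}, a logarithmic connection with the prescribed residues, so the required existence follows.

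For the ``only if'' direction, suppose $\nabla$ is a logarithmic connection on $E_G$ singular over $D$ with residue $w_x$ at each $x\in D$. Let $\mathcal{A}$ denote the set of all such logarithmic connections. Because the residue of $t\cdot\nabla$ at $x$ equals the adjoint action of $t$ applied to the residue of $\nabla$, and because each $w_x$ is $T$-rigid (hence $T$-fixed), the group $T$ preserves $\mathcal{A}$. Moreover, $\mathcal{A}$ is an affine space modeled on $H^0(X,\,\mathrm{ad}(E_G)\otimes K_X)$, and the $T$-action on $\mathcal{A}$ is affine (linear on the underlying vector space via the adjoint representation of $T$ on $\mathrm{ad}(E_G)$). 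Let $T_c\subset T$ be the maximal compact subgroup, and average $\nabla$ with respect to a Haar measure on $T_c$ to obtain $\nabla'\in\mathcal{A}$ that is fixed by $T_c$. Since the $T$-action on $\mathcal{A}$ is algebraic and $T_c$ is Zariski dense in $T$, the element $\nabla'$ is in fact $T$-invariant. Thus $\nabla'$ is a $T$-invariant logarithmic connection with residue $w_x$ at each $x$, so the implication (1)$\Rightarrow$(2) of Theorem \ref{intt1} yields the desired character-degree identity.

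The main technical point to verify is the averaging step: one must check that the $T$-action on $\mathcal{A}$ is genuinely affine (so that the Haar-integral of a family of elements of $\mathcal{A}$ defines a well-defined element of $\mathcal{A}$), and that the passage from $T_c$-invariance to $T$-invariance is legitimate. The first follows from unwinding the definition of the $\mathrm{Aut}(E_G)$-action on the space of logarithmic connections, where the ``derivative cocycle'' $t\mapsto t\cdot\nabla-\nabla$ takes values in the fixed vector space $H^0(X,\,\mathrm{ad}(E_G)\otimes K_X)$ and depends algebraically on $t$; the second follows because the stabilizer in $T$ of any element of $\mathcal{A}$ is Zariski closed, so it contains $T_c$ iff it equals $T$. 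Once this averaging is set up cleanly, Theorem \ref{intt2} becomes a direct corollary of Theorem \ref{intt1}.
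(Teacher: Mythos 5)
Your argument is circular with respect to the logical structure of the paper: Theorem \ref{intt1} (Theorem \ref{thm2} in the body) is itself proved there by reduction to Theorem \ref{intt2} (Theorem \ref{thm1}), so it cannot be invoked to prove Theorem \ref{intt2}. More importantly, the reduction hides all of the actual content. The ``if'' direction of Theorem \ref{intt2} is the existence statement, and your proof of it consists entirely of the sentence ``Theorem \ref{intt1} produces a $T$-invariant logarithmic connection''; no construction is given. The paper's proof of this direction (Proposition \ref{prop1}) is the heart of the matter: one passes to the Levi reduction $E_H\subset E_G$, splits $H$ up to isogeny as $S\times Z$ with $S$ semisimple and $Z$ a torus, handles the $Z$-part by the degree criterion for logarithmic connections on line bundles, and kills the obstruction class for the $S$-part by combining Serre duality with the Killing form and the fact that every holomorphic section of $\mathrm{ad}(E_S)$ is nilpotent, so that the obstruction pairs to zero against every section via a parabolic reduction. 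None of this is replaced by anything in your proposal. Similarly, the ``only if'' direction in the paper does not need $T$-invariance at all: one uses reductivity of $H$ to split a connection on $E_G$ down to $E_H$ (Lemma \ref{le-2}) and then applies the residue formula for the associated line bundles $E_H(\chi)$ (Lemma \ref{lem1}); routing this implication through Theorem \ref{intt1} again begs the question.

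That said, your averaging step is correct as far as it goes and is worth recording for a different purpose: starting from an arbitrary logarithmic connection with $T$-rigid residues, averaging over the maximal compact subgroup $T_c\subset T$ (the space of such connections being an affine space over $H^0(X,\,\mathrm{ad}(E_G)\otimes K_X)$ on which $T$ acts affinely and algebraically, so that $T_c$-invariance upgrades to $T$-invariance by Zariski density of $T_c$ and closedness of stabilizers) produces a $T$-invariant connection with the same residues. This gives an alternative proof of the passage from statement (2) to statement (1) inside Theorem \ref{intt1}, which the paper instead obtains by observing that the connection constructed in Proposition \ref{prop1} lives on $E_H$, where $T$ is the image of the central torus $T_G\subset H$ and hence acts trivially. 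But it does not prove Theorem \ref{intt2}, whose substance you would still need to supply.
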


\section{Logarithmic connections and residue}

\subsection{Preliminaries}\label{se2.1}

Let $G$ be a connected reductive affine algebraic group defined over $\mathbb C$. A 
Zariski closed connected subgroup $P\, \subset\, G$ is called a parabolic subgroup 
if $G/P$ is a projective variety \cite[11.2]{Bo}, \cite{Hu}. The unipotent radical 
of a parabolic subgroup $P\, \subset\, G$ will be denoted by $R_u(P)$. The quotient 
group $P/R_u(P)$ is called the \textit{Levi quotient} of $P$. A \textit{Levi factor} 
of $P$ is a Zariski closed connected subgroup $L\, \subset\, P$ such that the 
composition $L\, \hookrightarrow\, P\, \longrightarrow\, P/R_u(P)$ is an isomorphism 
\cite[p.~184]{Hu}. We note that $P$ admits Levi factors, and any two Levi factors of 
$P$ are conjugate by an element of $R_u(P)$ \cite[\S~30.2, p.~185, Theorem]{Hu}.

The multiplicative group ${\mathbb C}\setminus\{0\}$ will be denoted by
${\mathbb G}_m$. A torus is a product of copies of ${\mathbb G}_m$. Any
two maximal tori in a complex algebraic group are conjugate \cite[p.~158,
Proposition 11.23(ii)]{Bo}.

By a homomorphism between algebraic groups or by a character we will always mean a
holomorphic homomorphism or a holomorphic character.

\subsection{Logarithmic connections}\label{se2.2}

Let $X$ be a compact connected Riemann surface. Fix a finite subset
$$
D\,:=\, \{x_1,\, \cdots,\, x_n\}\, \subset\, X\, .
$$
The reduced effective divisor $x_1+\ldots + x_n$ will also be denoted by $D$.

Let
\begin{equation}\label{g1}
p\, :\, E_H\,\longrightarrow\, X
\end{equation}
be a holomorphic principal $H$--bundle on $X$, where $H$ is a
connected affine algebraic group defined over $\mathbb C$. The Lie algebra
of $H$ will be denoted by $\mathfrak h$. Let
\begin{equation}\label{dp}
\mathrm{d}p\, :\, \mathrm{T}E_H\, \longrightarrow\, p^*\mathrm{T}X
\end{equation}
be the differential of the map $p$ in \eqref{g1}, where
$\mathrm{T}E_H$ and $\mathrm{T}X$ are the holomorphic tangent bundles
of $E_H$ and $X$ respectively; note that $\mathrm{d}p$ is
surjective. The action of $H$ on $E_H$ produces an action of $H$ on $\mathrm{T}
E_H$. This action on $\mathrm{T}E_H$ clearly preserves the subbundle $\text{kernel}(\mathrm{d}p)$.
Define
$$
\text{ad}(E_H)\, :=\, \text{kernel}(\mathrm{d}p)/H\, \longrightarrow\, X\, ,
$$
which is a holomorphic vector bundle on $X$; it is called the
adjoint vector bundle for $E_H$. We note that $\text{ad}(E_H)$ is identified
with the vector bundle $E_H\times^H {\mathfrak h}\,\longrightarrow\, X$
associated to $E_H$ for the adjoint action of
$H$ on its Lie algebra $\mathfrak h$. So the fibers of $\text{ad}(E_H)$ are
Lie algebras isomorphic to $\mathfrak h$. Define the Atiyah bundle for $E_H$
$$
\text{At}(E_H)\,:=\, (\mathrm{T}E_H)/H\, \longrightarrow\, X\, .
$$
The action of $H$ on $\mathrm{T}E_H$ produces an action of $H$
on the direct image $p_*\mathrm{T}E_H$. We note that
$$
\text{At}(E_H)\,=\, (p_*\mathrm{T}E_H)^H \, \subset\, p_*\mathrm{T}E_H
$$
(see \cite{At}). Taking quotient by $H$, the homomorphism $\mathrm{d}p$
in \eqref{dp} produces a short exact sequence
\begin{equation}\label{at1}
0\, \longrightarrow\, \text{ad}(E_H)\, \longrightarrow\,\text{At}(E_H)\,
\stackrel{\mathrm{d}'p}{\longrightarrow}\, \mathrm{T}X\, \longrightarrow\, 0\, ,
\end{equation}
where $\mathrm{d}'p$ is constructed from $\mathrm{d}p$; this is known as
the Atiyah exact sequence for $E_H$.

The subsheaf $\mathrm{T}X\otimes {\mathcal O}_X(-D)$ of $\mathrm{T}X$ will be denoted
by $\mathrm{T}X(-D)$. Now define
$$
\text{At}(E_H,\,D)\,:=\, (\mathrm{d}'p)^{-1}(\mathrm{T}X(-D))
\,\subset\, \text{At}(E_H)\, ,
$$
where $\mathrm{d}'p$ is the projection in \eqref{at1}. So from \eqref{at1} we have
the exact sequence of vector bundles on $X$
\begin{equation}\label{e3}
0\, \longrightarrow\, \text{ad}(E_H)\, \stackrel{i_0}{\longrightarrow}\,
\text{At}(E_H,\,D)\, \stackrel{\sigma}{\longrightarrow}\, \mathrm{T}X(-D)\,
\longrightarrow\, 0\, ,
\end{equation}
where $\sigma$ is the restriction of $\mathrm{d}'p$; this will be called
the {\it logarithmic Atiyah exact sequence} for $E_H$.

A logarithmic connection on $E_H$ singular over $D$ is a holomorphic homomorphism
\begin{equation}\label{th}
\theta\, :\, \mathrm{T}X(-D)\, \longrightarrow\, \text{At}(E_H,\, D)
\end{equation}
such that $\sigma\circ\theta\,=\, \text{Id}_{\mathrm{T}X(-D)}$, where
$\sigma$ is the homomorphism in \eqref{e3}. Note that giving such a homomorphism
$\theta$ is equivalent to giving a homomorphism
$\varpi\, :\, \text{At}(E_H,\, D)\, \longrightarrow\, \text{ad}(E_H)$ such that
$\varpi\circ i_0\,=\, \text{Id}_{\text{ad}(E_H)}$, where $i_0$ is the homomorphism
in \eqref{e3}.

\subsection{Residue of a logarithmic connection}\label{se2.3}

Given a vector bundle $W$ on $X$, the fiber of $W$ over any point $x\, \in\, X$
will be denoted by $W_x$. For any ${\mathcal O}_X$--linear homomorphism
$f\, :\, W\,\longrightarrow\, V$ of holomorphic vector bundles, its restriction
$W_x\, \longrightarrow\, V_x$ will be denoted by $f(x)$.

From \eqref{at1} and \eqref{e3} we have the commutative diagram of homomorphisms
\begin{equation}\label{ef}
\begin{matrix}
0 & \longrightarrow & \text{ad}(E_H) & \stackrel{i_0}{\longrightarrow} & \text{At}(E_H,\,D)
& \stackrel{\sigma}{\longrightarrow} &\mathrm{T}X(-D) & \longrightarrow & 0\\
&& \Vert && ~\Big\downarrow j && ~\Big\downarrow \iota\\
0 & \longrightarrow & \text{ad}(E_H) & \stackrel{i}{\longrightarrow} & \text{At}(E_H)
& \stackrel{\mathrm{d}'p}{\longrightarrow} &\mathrm{T}X & \longrightarrow & 0\\
\end{matrix}
\end{equation}
on $X$. So for any point $x\, \in\, X$, we have
$$
\mathrm{d}'p (x)\circ j(x)\,=\, \iota (x)\circ \sigma (x)\,:\, \text{At}(E_H,\,D)_x\, \longrightarrow\,
(\mathrm{T}X)_x\,=\, \mathrm{T}_xX\, .
$$
Note that $\iota (x)\,=\, 0$ if $x\, \in\, D$, therefore in that case
$\mathrm{d}'p (x)\circ j(x)\,=\, 0$. Consequently, for every $x\, \in\, D$ there is a homomorphism
\begin{equation}\label{R}
R_x\, :\, \text{At}(E_H,\,D)_x\, \longrightarrow\, \text{ad}(E_H)_x
\end{equation}
uniquely defined by the identity $i(x)\circ R_x(v)\,=\, j(x)(v)$ for all $v\, \in\,
\text{At}(E_H,\,D)_x$. Note that
$$
R_x\circ i_0(x)\, =\, \text{Id}_{\text{ad}(E_H)_x}\, ,
$$
where $i_0$ is the homomorphism in \eqref{ef}. Therefore, from \eqref{e3} we have
\begin{equation}\label{ef2}
\text{At}(E_H,\,D)_x\,=\, \text{ad}(E_H)_x\oplus \text{kernel}(R_x)\,=\,
\text{ad}(E_H)_x\oplus \mathrm{T}X(-D)_x\, ;
\end{equation}
note that the composition $\text{kernel}(R_x)\, \hookrightarrow\,
\text{At}(E_H,\,D)_x \, \stackrel{\sigma(x)}{\longrightarrow}\,
\mathrm{T}X(-D)_x$ is an isomorphism.

For any $x\,\in\, D$, the fiber $\mathrm{T}X(-D)_x$ is identified with $\mathbb C$ 
using the Poincar\'e adjunction formula \cite[p.~146]{GH}. Indeed, for any 
holomorphic coordinate $z$ around $x$ with $z(x)\,=\, 0$, the image of 
$z\frac{\partial}{\partial z}$ in $\mathrm{T}X(-D)_x$ is independent of the choice 
of the coordinate function $z$; the above mentioned identification between 
$\mathrm{T}X(-D)_x$ and $\mathbb C$ sends this independent image to $1\,\in\, 
\mathbb C$. Therefore, from \eqref{ef2} we have
\begin{equation}\label{ef3}
\text{At}(E_H,\,D)_x\,=\,\text{ad}(E_H)_x\oplus{\mathbb C}
\end{equation}
for all $x\, \in\, D$.

For a logarithmic connection $\theta\, :\, \mathrm{T}X(-D)\, \longrightarrow\, \text{At}(E_H,\, D)$
as in \eqref{th}, and any $x\, \in\, D$, define
\begin{equation}\label{res}
\text{Res}(\theta,\, x)\,:=\, R_x(\theta(1))\, \in\, \text{ad}(E_H)_x\, ,
\end{equation}
where $R_x$ is the homomorphism in \eqref{R}; in the above definition $1$ is 
considered as an element of $\mathrm{T}X(-D)_x$ using the identification of 
$\mathbb C$ with $\mathrm{T}X(-D)_x$ mentioned earlier.

The element $\text{Res}(\theta,\, x)$ in \eqref{res} is called the \textit{residue},
at $x$, of the logarithmic connection $\theta$.

\subsection{Extension of structure group}\label{se2.4}

Let $M$ be a complex affine algebraic group and
$$
\rho\, :\, H\, \longrightarrow\, M
$$
a homomorphism. As before, $E_H$ is a holomorphic principal $H$--bundle on $X$. Let
$$
E_M\,:=\,E_H\times^\rho M\, \longrightarrow\, X
$$
be the holomorphic principal $M$--bundle obtained by extending the structure group of $E_H$ using
$\rho$. So $E_M$ is the quotient of $E_H\times M$ obtained by identifying
$(y,\, m)$ and $(yh^{-1},\,\rho(h)m)$, where $y$, $m$ and $h$ run over
$E_H$, $M$ and $H$ respectively. Therefore, we have a morphism
$$
\widehat{\rho}\, :\, E_H\, \longrightarrow\, E_M\, , \ \ y\, \longmapsto\, 
\widetilde{(y,\, e_M)}\, ,
$$
where $\widetilde{(y,\, e_M)}$ is the equivalence class of $(y,\, e_M)$ with $e_M$ being the 
identity element of $M$. The homomorphism of Lie algebras $d\rho\,:\,{\mathfrak h}\, 
\longrightarrow\, {\mathfrak m}\, :=\, \text{Lie}(M)$ associated to $\rho$ produces a homomorphism of vector bundles
\begin{equation}\label{alpha}
\alpha\, :\, \text{ad}(E_H)\, \longrightarrow\, \text{ad}(E_M)\, .
\end{equation}
The maps $\widehat{\rho}$ and $d\rho$ together produce
a homomorphism of vector bundles
$$
\widetilde{A}\, :\, \text{At}(E_H)\, \longrightarrow\, \text{At}(E_M)\, .
$$
This map $\widetilde{A}$ produces a homomorphism
\begin{equation}\label{A}
A\, :\, \text{At}(E_H,\, D)\, \longrightarrow\, \text{At}(E_M,\, D)\, ,
\end{equation}
which fits in the following commutative diagram of homomorphisms
\begin{equation}\label{comA}
\begin{matrix}
0 & \longrightarrow & \text{ad}(E_H) & \stackrel{i_0}{\longrightarrow} & \text{At}(E_H,
\,D) & \stackrel{\sigma}{\longrightarrow} &\mathrm{T}X(-D) & \longrightarrow & 0\\
&& ~\Big\downarrow\alpha && ~\Big\downarrow A && \Vert \\
0 & \longrightarrow & \text{ad}(E_M) & \longrightarrow & \text{At}(E_M,\, D)
& \longrightarrow &\mathrm{T}X(-D) & \longrightarrow & 0\\
\end{matrix}
\end{equation}
where the top exact sequence is the one in \eqref{e3} and the bottom one is
the corresponding sequence for $E_M$.

If $\theta\, :\, \mathrm{T}X(-D)\, \longrightarrow\, \text{At}(E_H,\, D)$ is a
logarithmic connection on $E_H$ as in \eqref{th}, then
\begin{equation}\label{ic}
A\circ\theta\, :\, \mathrm{T}X(-D)\, \longrightarrow\, \text{At}(E_M,\, D)
\end{equation}
is a logarithmic connection on $E_M$ singular over $D$. From the definition of
residue in \eqref{res} it follows immediately that 
\begin{equation}\label{res2}
\alpha(\text{Res}(\theta,\, x))\,=\, \text{Res}(A\circ\theta,\, x)
\end{equation}
for all $x\, \in\, D$. This proves the following:

\begin{lemma}\label{lem2}
With the above notation, if $E_H$ admits a logarithmic connection 
$\theta$ singular over $D$ with residue $w_x \in {\rm ad}(E_H)_x$ 
at each $x \,\in\, D$, then $E_M$ admits a logarithmic connection 
$\theta'\, =\, A\circ\theta$ singular over $D$ with 
residue $\alpha(w_x)$ at each $x \,\in \,D$.
\end{lemma}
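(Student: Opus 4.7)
The plan is that nearly all of the work has already been assembled in the paragraphs leading up to the lemma statement, so the proof amounts to unpacking two verifications from the commutative diagram \eqref{comA}: that $A\circ\theta$ is a logarithmic connection on $E_M$, and that its residue at $x\in D$ is $\alpha(w_x)$.

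First I would verify that $\theta':=A\circ\theta$ is genuinely a logarithmic connection on $E_M$ singular over $D$, which reduces to checking $\sigma_M\circ\theta'=\mathrm{Id}_{\mathrm{T}X(-D)}$, where $\sigma_M$ denotes the analogue of $\sigma$ in \eqref{e3} for the bundle $E_M$. The right square of the diagram \eqref{comA} says $\sigma_M\circ A=\sigma$, so combined with $\sigma\circ\theta=\mathrm{Id}$ we obtain $\sigma_M\circ A\circ\theta=\mathrm{Id}_{\mathrm{T}X(-D)}$. This is exactly the assertion contained in \eqref{ic}.

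Next I would prove the residue identity $\mathrm{Res}(A\circ\theta,\,x)=\alpha(\mathrm{Res}(\theta,\,x))$ for every $x\in D$, which is the content of \eqref{res2}. The residue map $R_x$ of \eqref{R} was defined via the uniquely determined factorization of $j(x)$ through $i(x)$ in the diagram \eqref{ef}, so the natural strategy is to extend \eqref{comA} to include the corresponding diagrams for $\mathrm{At}(E_H)$ and $\mathrm{At}(E_M)$ and then use functoriality. Concretely, the Atiyah bundle map $\widetilde A\colon\mathrm{At}(E_H)\to\mathrm{At}(E_M)$ fits over $\alpha$ in the Atiyah sequence \eqref{at1}; restricting to a point $x\in D$ and chasing, one finds $\alpha\circ R_x=R^M_x\circ A(x)$, where $R^M_x$ is the corresponding map for $E_M$. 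Evaluating at $\theta(1)\in\mathrm{At}(E_H,D)_x$ with $1$ the canonical generator of $\mathrm{T}X(-D)_x$ furnished by the Poincaré adjunction formula then yields $R^M_x(A(x)(\theta(1)))=\alpha(R_x(\theta(1)))$, which is the required equality $\mathrm{Res}(A\circ\theta,\,x)=\alpha(w_x)$.

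The only mildly delicate point is the second step, because the residue map $R_x$ is defined indirectly through the kernel structure at $x\in D$ of the vertical map $\iota$, rather than directly in terms of the bundle maps; so one must be slightly careful that the functoriality of the construction under structure-group extension is what delivers $\alpha\circ R_x=R^M_x\circ A(x)$. This is transparent once the diagram \eqref{comA} is extended downward and leftward to include the ordinary Atiyah sequences \eqref{at1} for both $E_H$ and $E_M$, but it is the only calculation requiring care. Everything else is automatic from the definitions.
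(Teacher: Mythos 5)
Your proposal is correct and follows exactly the paper's route: the paper derives the lemma from \eqref{ic} (that $A\circ\theta$ is a logarithmic connection, via the diagram \eqref{comA}) and \eqref{res2} (the residue identity $\alpha(\mathrm{Res}(\theta,x))=\mathrm{Res}(A\circ\theta,x)$, asserted to follow from the definition of residue). You merely fill in the diagram chase behind \eqref{res2}, namely $\alpha(x)\circ R_x = R^M_x\circ A(x)$, which the paper leaves implicit.
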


\section{Connections with given residues}

\subsection{Formulation of residue condition}

Fix a holomorphic principal $H$--bundle $E_H$ on $X$, and fix elements
$$
w_x\, \in\, \text{ad}(E_H)_x
$$
for all $x\, \in\, D$. Consider the decomposition of $\text{At}(E_H,\,D)_x$ in
\eqref{ef3}. For any $x\, \in\, D$, let
$$
{\ell}_x\, :=\, {\mathbb C}\cdot (w_x, \, 1)\, \subset\, 
\text{ad}(E_H)_x\oplus{\mathbb C}\,=\, \text{At}(E_H,\,D)_x
$$
be the line in the fiber $\text{At}(E_H,\,D)_x$. Let
$$
{\mathcal A}\, \subset\, \text{At}(E_H,\,D)
$$
be the subsheaf that fits in the short exact sequence
\begin{equation}\label{cA}
0\, \longrightarrow\, {\mathcal A}\, \longrightarrow\, \text{At}(E_H,\,D)
\, \longrightarrow\, \bigoplus_{x\in D} \text{At}(E_H,\,D)_x/{\ell}_x
\, \longrightarrow\, 0\, .
\end{equation}
Note that the composition
$$
\text{ad}(E_H)_x\, \stackrel{i_0(x)}{\longrightarrow}\, \text{At}(E_H,\,D)_x
\, {\longrightarrow}\, \text{At}(E_H,\,D)_x/\ell_x
$$
is injective, hence it is an isomorphism, where $i_0$ is the homomorphism in \eqref{comA}; this
composition will be denoted by $\phi_x$. Therefore, from \eqref{e3} and \eqref{cA} we have a
commutative diagram
\begin{equation}\label{cdc}
\begin{matrix}
&& 0 && 0 && 0\\
&& \Big\downarrow &&\Big\downarrow && \Big\downarrow\\
0&\longrightarrow & \text{ad}(E_H)\otimes{\mathcal O}_X(-D)
&\longrightarrow & {\mathcal A}&\stackrel{\sigma_1}{\longrightarrow} & \mathrm{T}X(-D)
&\longrightarrow & 0\\
&& \Big\downarrow &&~ \Big\downarrow\nu && ~\,~\Big\downarrow{\rm id}\\
0&\longrightarrow & \text{ad}(E_H) &\stackrel{i_0}{\longrightarrow} &
\text{At}(E_H,\,D) &\stackrel{\sigma}{\longrightarrow} & \mathrm{T}X(-D)
&\longrightarrow & 0\\
&& \Big\downarrow &&\Big\downarrow && \Big\downarrow\\
0&\longrightarrow & \bigoplus\limits_{x\in D} \text{ad}(E_H)_x
&\stackrel{\bigoplus\limits_{x\in D} \phi_x}{\longrightarrow}
& \bigoplus\limits_{x\in D} \text{At}(E_H,\,D)_x/{\ell}_x &\longrightarrow & 0
&\longrightarrow & 0\\
&& \Big\downarrow &&\Big\downarrow && \Big\downarrow\\
&& 0 && 0 && 0
\end{matrix}
\end{equation}
where all the rows and columns are exact; the restriction of $\sigma$ to the
subsheaf $\mathcal A$ is denoted by $\sigma_1$.

\begin{lemma}\label{lem0}
Consider the space of all logarithmic connections on $E_H$ singular over $D$
such that the residue over every $x\, \in\, D$ is $w_x$. It is in bijection with the
space of all holomorphic splittings of the short exact sequence of vector bundles
$$
0\,\longrightarrow\,{\rm ad}(E_H)\otimes{\mathcal O}_X(-D)\,
\longrightarrow \, {\mathcal A}\,\stackrel{\sigma_1}{\longrightarrow} \, \mathrm{T}X(-D)
\,\longrightarrow \,0
$$
on $X$ in \eqref{cdc}.
\end{lemma}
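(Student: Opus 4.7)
The plan is to build the bijection explicitly from the commutative diagram \eqref{cdc}, by showing that a logarithmic connection $\theta$ has residue $w_x$ at every $x\in D$ if and only if it factors through the subsheaf $\nu:{\mathcal A}\hookrightarrow\text{At}(E_H,D)$, and then identifying the unique lift with a splitting of the top row.

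First I would reinterpret the residue condition. By the decomposition \eqref{ef3}, for each $x\in D$ the element $\theta(1)\in\text{At}(E_H,D)_x$ has second coordinate equal to $1$ (because $\sigma\circ\theta=\text{Id}$), and by \eqref{R} and \eqref{res} its first coordinate is precisely $\text{Res}(\theta,x)$. Hence $\text{Res}(\theta,x)=w_x$ is equivalent to $\theta(1)=(w_x,1)\in\ell_x$, and this amounts to saying that the image of $\theta(1)$ in the quotient $\text{At}(E_H,D)_x/\ell_x$ vanishes. Since the cokernel of $\nu$ in \eqref{cA} is the skyscraper sheaf $\bigoplus_{x\in D}\text{At}(E_H,D)_x/\ell_x$ supported on $D$, the composition of $\theta$ with the projection to this skyscraper vanishes as a sheaf map if and only if it vanishes on each fiber at $x\in D$. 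Thus $\theta$ has the prescribed residues if and only if $\theta$ lifts uniquely to some $\widetilde{\theta}:\mathrm{T}X(-D)\to {\mathcal A}$ with $\theta=\nu\circ\widetilde{\theta}$.

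Given such a factorization, commutativity of \eqref{cdc} yields $\sigma_1\circ\widetilde{\theta}=\sigma\circ\nu\circ\widetilde{\theta}=\sigma\circ\theta=\text{Id}$, so $\widetilde{\theta}$ splits the top row. Conversely, starting from a splitting $\widetilde{\theta}$ of the top row, I would set $\theta:=\nu\circ\widetilde{\theta}$; then $\sigma\circ\theta=\sigma_1\circ\widetilde{\theta}=\text{Id}$ makes $\theta$ a logarithmic connection, and for each $x\in D$ the value $\theta(1)=\nu(x)(\widetilde{\theta}(1))$ lies in the image of $\nu(x):{\mathcal A}_x\to\text{At}(E_H,D)_x$. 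Describing local sections of ${\mathcal A}$ near $x$ as local sections of $\text{At}(E_H,D)$ whose value at $x$ lies in $\ell_x$ (the defining property from \eqref{cA}), this image is exactly $\ell_x$; combined with $\sigma(x)(\theta(1))=1$, this forces $\theta(1)=(w_x,1)$, so $\text{Res}(\theta,x)=w_x$. The two constructions are inverse to each other because $\nu$ is a monomorphism of sheaves, making the lift $\widetilde{\theta}$ unique.

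The main delicate point is the backward direction: although $\nu$ is a sheaf monomorphism, the induced map $\nu(x)$ on fibers at $x\in D$ has a nontrivial kernel, coming from sections of the form $z\cdot s$ with $z$ a local coordinate vanishing at $x$. So one cannot directly assert $\widetilde{\theta}(1)\in\ell_x$; instead one works with the image $\nu(x)({\mathcal A}_x)=\ell_x$, which is the output of restricting \eqref{cA} to a small neighbourhood of $x$. Once this subtlety is cleared, the bijectivity is a direct diagram chase.
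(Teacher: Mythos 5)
Your proof is correct and follows essentially the same route as the paper's: both identify the residue condition with the statement that $\theta$ factors through the subsheaf $\mathcal A$, and then match the unique lift with a splitting of the top row of \eqref{cdc}. The only difference is that you spell out the two points the paper leaves implicit, namely that a map to the skyscraper cokernel vanishes iff it vanishes fiberwise over $D$, and that the fiber map $\nu(x)$ has image exactly $\ell_x$ despite having nontrivial kernel; these details are accurate and welcome.
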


\begin{proof}
Let $\theta\, :\, \mathrm{T}X(-D)\, \longrightarrow\, \text{At}(E_H,\, D)$
be a logarithmic connection on $E_H$ singular over $D$
such that the residue over every $x\, \in\, D$ is $w_x$. From the definition of residue
and the construction of $\mathcal A$ it follows that
$$
\theta(\mathrm{T}X(-D))\, \subset\, {\mathcal A}\, \subset\,\text{At}(E_H,\, D)\, .
$$
Therefore, $\theta$ defines a holomorphic homomorphism
$$
\theta'\, :\, \mathrm{T}X(-D)\, \longrightarrow\, {\mathcal A}\, .
$$
Evidently, we have $\sigma_1\circ\theta'\,=\, \text{Id}_{\mathrm{T}X(-D)}$. So
$\theta'$ is a holomorphic splitting of the exact sequence in the lemma.

To prove the converse, let
$$
\theta_1\, :\, \mathrm{T}X(-D)\, \longrightarrow\, {\mathcal A}
$$
be a holomorphic homomorphism such that $\sigma_1\circ\theta_1\,=\,
\text{Id}_{\mathrm{T}X(-D)}$. Consider the composition
$$
\nu\circ\theta_1\, :\, \mathrm{T}X(-D)\, \longrightarrow\, \text{At}(E_H,\, D)\, ,
$$
where $\nu$ is the homomorphism in \eqref{cdc}. This defines a logarithmic connection
on $E_H$ singular over $D$, because $\sigma\circ\nu\circ\theta_1\,=\,
\sigma_1\circ\theta_1\,=\, \text{Id}_{\mathrm{T}X(-D)}$ by the
commutativity of \eqref{cdc}. From \eqref{cdc} it follows
immediately that $\nu\circ\theta_1(\mathrm{T}X(-D)_x)\, =\, \ell_x\, \subset\,
\text{At}(E_H,\,D)_x$ for every $x\, \in\, D$. Now from the definition of residue
it follows that the residue of the connection $\nu\circ\theta_1$ at any
$x\, \in\, D$ is $w_x$.
\end{proof}

\subsection{Extension class}

The short exact sequence in Lemma \ref{lem0} determines a cohomology class 
\begin{equation}\label{ext-cls}
 \beta \,\in\, H^1(X,\, \text{Hom}({\rm T}X(-D),\,{\rm ad}(E_H) \otimes \mathcal{O}_X(-D)))\, 
 =\, H^1(X,\, {\rm ad}(E_H) \otimes K_X)\,,
\end{equation}
where $K_X \,=\, ({\rm T}X)^*$ is the holomorphic cotangent bundle of $X$. 
Therefore, $E_H$ admits a logarithmic connection singular over 
$D$ with residue $w_x \in {\rm ad}(E_H)_x$ at each $x \in D$ if and 
only if the cohomology class $\beta$ in \eqref{ext-cls} vanishes. 

Let $\rho : H \longrightarrow M$ be a homomorphism of affine algebraic 
groups. Let $E_M := E_H \times^{\rho} M$ be the principal $M$--bundle over $X$ 
obtained by extending the structure group of $E_H$ to $M$ by $\rho$. Consider the
homomorphism $\alpha$ in \eqref{alpha}. It produces a homomorphism
$$
\overline{\rho}\,:\, H^1(X,\, {\rm ad}(E_H) \otimes K_X) \longrightarrow H^1(X,\, {\rm ad}(E_M) \otimes K_X)\,.
$$
{}From \eqref{comA} we have a commutative diagram
$$
\begin{matrix}
0 & \longrightarrow & \text{ad}(E_H)\otimes{\mathcal O}_X(-D) & \longrightarrow & {\mathcal A}
& \stackrel{\sigma_1}{\longrightarrow} &\mathrm{T}X(-D) & \longrightarrow & 0\\
&& \Big\downarrow && \Big\downarrow && \Vert \\
0 & \longrightarrow & \text{ad}(E_M)\otimes{\mathcal O}_X(-D) & \longrightarrow & {\mathcal A}(E_M)
& \longrightarrow &\mathrm{T}X(-D) & \longrightarrow & 0\\
\end{matrix}
$$
where the top exact sequence is the one in Lemma \ref{lem0} and the bottom one is the same sequence 
for $E_M$. From this diagram it follows that the cohomology class in $H^1(X,\, {\rm ad}(E_M) \otimes K_X)$ for 
the short exact sequence in Lemma \ref{lem0} for $E_M$ coincides with $\overline{\rho}(\beta)$, 
where $\beta$ is the cohomology class in \eqref{ext-cls}.

\begin{corollary}\label{cor1}
Assume that $E_H$ admits a holomorphic reduction of structure
group $E_J\, \subset\, E_H$ to a complex algebraic subgroup $J \,\subset\, H$.
The cohomology class $\beta$ in \eqref{ext-cls} is contained in the image of the
natural homomorphism $H^1(X,\, {\rm ad}(E_J) \otimes K_X)
\, \hookrightarrow\, H^1(X,\, {\rm ad}(E_H) \otimes K_X)$
\end{corollary}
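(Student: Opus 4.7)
The plan is to leverage the functoriality of the extension class $\beta$ under change of structure group, which was essentially established in the commutative diagram appearing immediately before the corollary statement. Since $E_J\,\subset\, E_H$ is a reduction of structure group, one has $E_H \,\cong\, E_J\times^\iota H$ for $\iota\,:\,J\,\hookrightarrow\, H$ the inclusion; thus the setup of Subsection \ref{se2.4} applies with $(J,\, E_J)$ now playing the role of $(H,\, E_H)$ and $(H,\, E_H)$ playing the role of $(M,\, E_M)$.

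First I would note that $d\iota\,:\,\text{Lie}(J)\,\hookrightarrow\,\text{Lie}(H)\,=\,\mathfrak{h}$ is injective, so the induced vector bundle homomorphism $\alpha\,:\,\text{ad}(E_J)\,\hookrightarrow\,\text{ad}(E_H)$ of \eqref{alpha} is injective as well. The residue data for $E_H$ is compatible with the reduction precisely when each $w_x$ lies in the image of $\alpha(x)$, in which case I would choose the unique lift $v_x\,\in\,\text{ad}(E_J)_x$ satisfying $\alpha(v_x)\,=\,w_x$.

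Next, I would apply the subsheaf construction of \eqref{cA} to the pair $(E_J,\,\{v_x\}_{x\in D})$, obtaining a subsheaf $\mathcal{A}_J\,\subset\,\text{At}(E_J,\,D)$ sitting in a short exact sequence
\[
0\,\longrightarrow\,\text{ad}(E_J)\otimes\mathcal{O}_X(-D)\,\longrightarrow\,\mathcal{A}_J\,\longrightarrow\,\mathrm{T}X(-D)\,\longrightarrow\, 0
\]
whose extension class I denote by $\beta_J\,\in\,H^1(X,\,\text{ad}(E_J)\otimes K_X)$. The commutative diagram preceding the corollary (applied with $\iota$ in place of the general $\rho$) then identifies $\overline{\iota}(\beta_J)$ with the class $\beta$ of \eqref{ext-cls}, whence $\beta$ lies in the image of the natural map $H^1(X,\,\text{ad}(E_J)\otimes K_X)\,\longrightarrow\,H^1(X,\,\text{ad}(E_H)\otimes K_X)$.

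The only step that requires genuine checking is that the homomorphism $A\,:\,\text{At}(E_J,\,D)\,\longrightarrow\,\text{At}(E_H,\,D)$ of \eqref{A} carries the distinguished line $\mathbb{C}\cdot(v_x,\,1)\,\subset\,\text{At}(E_J,\,D)_x$ to the corresponding line $\ell_x\,=\,\mathbb{C}\cdot(w_x,\,1)\,\subset\,\text{At}(E_H,\,D)_x$; this uses the identity $\alpha(v_x)\,=\,w_x$ together with the residue compatibility \eqref{res2}. Once this is in hand, $A$ restricts to a homomorphism $\mathcal{A}_J\,\longrightarrow\,\mathcal{A}$ fitting into a morphism of the two displayed short exact sequences, and that morphism of extensions is exactly what induces the relation $\beta\,=\,\overline{\iota}(\beta_J)$ at the level of $H^1$.
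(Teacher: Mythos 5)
Your argument is exactly the paper's: the corollary is obtained by specializing the commutative diagram displayed immediately before it to the inclusion $\iota\colon J\hookrightarrow H$ (so that $E_H\cong E_J\times^{\iota}H$ and the roles of $(H,E_H)$ and $(M,E_M)$ are played by $(J,E_J)$ and $(H,E_H)$), which yields $\beta=\overline{\iota}(\beta_J)$ for the class $\beta_J$ of the corresponding sequence built from the lifts $v_x$. You are also right to insist that each $w_x$ lie in $\alpha\bigl(\mathrm{ad}(E_J)_x\bigr)$ so that the lines $\ell_x$, and hence the sheaf $\mathcal{A}_J$, are defined at the level of $E_J$ — the paper leaves this compatibility hypothesis implicit in the statement, but without it the claim can fail (e.g.\ already for $J$ trivial and a nonzero residue on a degree-zero line bundle).
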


\subsection{A necessary condition for logarithmic connections with given 
residue}\label{se3}

Let $\theta$ be a logarithmic connection on $E_H$ singular over $D$.
Take any character $$\chi\, :\, H\, \longrightarrow\, {\mathbb G}_m\, .$$
The group $H$ acts on $\mathbb C$; the action of $h\, \in\, H$ sends any
$c\, \in\, \mathbb C$ to $\chi(h)\cdot c$. Let
$$
E_H(\chi)\, :=\, E_H\times^\chi {\mathbb C}\, \longrightarrow\, X
$$
be the holomorphic line bundle over $X$ associated to $E_H$ for this action of $H$ on 
$\mathbb C$. Since ${\mathbb G}_m$ is abelian, the adjoint vector bundle for $E_H(\chi)$ is 
the trivial holomorphic line bundle ${\mathcal O}_X$ over $X$. The above logarithmic 
connection $\theta$ induces a logarithmic connection on $E_H(\chi)$ (see 
\eqref{ic}); this induced logarithmic connection on $E_H(\chi)$ will be denoted by 
$\theta^\chi$.

For any $x\,\in\, D$, let
$$
\text{Res}(\theta^\chi,\, x)\, \in\, \mathbb C
$$
be the residue of $\theta^\chi$. The residue $\text{Res}(\theta,\, x)$ defines a conjugacy
class in the Lie algebra $\mathfrak h$, because any fiber of $\text{ad}(E_H)$ is
identified with $\mathfrak h$ uniquely up to conjugation. From
\eqref{res2} it follows immediately that
$\text{Res}(\theta^\chi,\, x)$ coincides with with $d\chi(\text{Res}(\theta,\, x))$, where
$d\chi\, :\, {\mathfrak h}\,\longrightarrow\, \mathbb C$ is the homomorphism of Lie algebras
associated to $\chi$; note that since the Lie algebra $\mathbb C$ is abelian, the homomorphism
$d\chi$ factors through the conjugacy classes in $\mathfrak h$.

As $\theta^\chi$ is a logarithmic connection on the line bundle $E_H(\chi)$ with
residue $d\chi(\text{Res}(\theta,\, x))$ at each $x\, \in\, D$, using a computation in
\cite{Oh} it follows that
$$
\text{degree}(E_H(\chi))+\sum_{x\in D} d\chi(\text{Res}(\theta,\, x))\,=\, 0
$$
(see \cite[Lemma 2.3]{BDP}).

Therefore, we have the following:

\begin{lemma}\label{lem1}
Let $E_H$ be a holomorphic principal $H$--bundle on $X$. Fix
$$
w_x\, \in\, {\rm ad}(E_H)_x
$$
for every $x\, \in\, D$. If there is a logarithmic connection on $E_H$ singular over $D$ with
residue $w_x$ at every $x\, \in\, D$, then
$$
{\rm degree}(E_H(\chi))+\sum_{x\in D} d\chi(w_x)\,=\, 0
$$
for every character $\chi$ of $H$, where $E_H(\chi)$ is the associated holomorphic
line bundle, and $d\chi$ is the homomorphism of Lie algebras corresponding to $\chi$.
\end{lemma}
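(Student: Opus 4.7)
The plan is to reduce the statement to the well-known line bundle case by extending the structure group along the character $\chi$, and then invoke the classical residue formula for logarithmic connections on line bundles. Concretely, suppose a logarithmic connection $\theta$ on $E_H$ with residues $w_x$ exists. Apply Lemma \ref{lem2} with the homomorphism $\rho = \chi : H \longrightarrow {\mathbb G}_m$ to obtain a logarithmic connection $\theta^\chi := A\circ\theta$ on the associated line bundle $E_H(\chi) = E_H \times^\chi {\mathbb C}$, singular over $D$.

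Next I would identify the residues of $\theta^\chi$. Since ${\mathbb G}_m$ is abelian, the adjoint bundle $\text{ad}(E_H(\chi))$ is canonically the trivial line bundle ${\mathcal O}_X$, so every residue of $\theta^\chi$ is naturally a complex number. The map $\alpha$ of \eqref{alpha} is induced fiberwise by $d\chi : {\mathfrak h} \longrightarrow {\mathbb C}$, which, being a homomorphism into an abelian Lie algebra, is constant on conjugacy classes and hence well-defined on fibers of $\text{ad}(E_H)$. Equation \eqref{res2} then gives $\text{Res}(\theta^\chi,\,x) = d\chi(w_x)$ for each $x \in D$.

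Finally, I would apply the classical Ohtsuki residue formula for logarithmic connections on a line bundle \cite{Oh} (as stated in \cite[Lemma 2.3]{BDP}): the degree of the line bundle plus the sum of the residues of any logarithmic connection on it equals zero. Applied to $(E_H(\chi),\,\theta^\chi)$, this yields
$$
\text{degree}(E_H(\chi)) + \sum_{x\in D} d\chi(w_x) \,=\, 0\, ,
$$
which is the desired identity. The main conceptual point to verify carefully is the functoriality statement in the middle paragraph, namely that $\alpha$ passes through the identification of $\text{ad}(E_H(\chi))$ with ${\mathcal O}_X$ to send $w_x$ to $d\chi(w_x)$; this is immediate from the construction of $\alpha$ together with the conjugation-invariance of $d\chi$, and is already encoded in \eqref{res2}. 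Everything else is a direct application of previously established machinery, so no genuine obstacle is expected.
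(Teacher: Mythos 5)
Your proposal is correct and follows essentially the same route as the paper: the paper also extends the structure group along $\chi$ to obtain $\theta^\chi$ on $E_H(\chi)$, uses \eqref{res2} to identify $\mathrm{Res}(\theta^\chi,\,x)$ with $d\chi(w_x)$, and then invokes the line bundle residue formula from \cite{Oh} as stated in \cite[Lemma 2.3]{BDP}. The only cosmetic difference is that you route the first step explicitly through Lemma \ref{lem2}, while the paper cites the construction \eqref{ic} directly; these are the same thing.
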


Let $\rho\, :\, H\, 
\longrightarrow\, M$ be an injective homomorphism to a connected complex 
algebraic group $M$ and $E_M\, :=\, 
E_H\times^\rho M$ the holomorphic principal $M$--bundle on $X$ obtained by extending 
the structure group of $E_H$ using $\rho$. As before, the Lie algebras of $H$ and $M$
will be denoted by $\mathfrak h$ and $\mathfrak m$ respectively. Using the injective
homomorphism of Lie algebras
\begin{equation}\label{df}
d\rho\, :\, {\mathfrak h}\,\longrightarrow\, {\mathfrak m}
\end{equation}
associated to $\rho$, we have an injective homomorphism $\alpha$ as in \eqref{alpha}.
For every $x\, \in\, D$, fix an element $w_x\, \in\, \text{ad}(E_H)_x$.

\begin{lemma}\label{le-2}
Assume that $H$ is reductive.
There is a logarithmic connection on $E_H$ singular over $D$ with residue $w_x$
at each $x\, \in\, D$ if there is a logarithmic connection on $E_M$ singular over $D$
with residue $\alpha(x)(w_x)$ at each $x\, \in\, D$, where $\alpha$ is the homomorphism
in \eqref{alpha}.
\end{lemma}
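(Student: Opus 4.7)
The plan is to deduce this from the extension-class reformulation established just before Corollary \ref{cor1}. Recall that the existence of a logarithmic connection on $E_H$ singular over $D$ with residues $w_x$ is equivalent to the vanishing of the class
$$
\beta\,\in\, H^1(X,\,\text{ad}(E_H)\otimes K_X)
$$
of \eqref{ext-cls}, and that the corresponding class for $E_M$ with residues $\alpha(x)(w_x)$ is precisely $\overline{\rho}(\beta)$. Hence it suffices to show that the homomorphism
$$
\overline{\rho}\,:\, H^1(X,\,\text{ad}(E_H)\otimes K_X)\,\longrightarrow\, H^1(X,\,\text{ad}(E_M)\otimes K_X)
$$
is injective; then $\overline{\rho}(\beta)\,=\,0$ will force $\beta\,=\,0$, and by the earlier characterization the desired connection on $E_H$ exists.

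To prove injectivity of $\overline{\rho}$, I would construct a holomorphic splitting of $\alpha$. Since $\rho$ is injective, so is $d\rho$ in \eqref{df}, and $\mathfrak{h}$ becomes an $H$-submodule of $\mathfrak{m}$ for the action $\text{Ad}_M\circ\rho$, which is a rational representation of the algebraic group $H$. The reductivity hypothesis on $H$ implies that every such representation is completely reducible, so there is an $H$-equivariant direct-sum decomposition $\mathfrak{m}\,=\,\mathfrak{h}\oplus\mathfrak{n}$. Applying the associated-bundle construction $E_H\times^H(-)$, and using that $\text{ad}(E_M)\,=\,E_H\times^H\mathfrak{m}$ via $\rho$, one obtains a holomorphic splitting
$$
\text{ad}(E_M)\,=\,\text{ad}(E_H)\oplus \mathcal{N}, \qquad \mathcal{N}\,:=\,E_H\times^H\mathfrak{n},
$$
in which the inclusion of the first summand is $\alpha$.

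Tensoring this splitting with $K_X$ and passing to $H^1$ gives a direct-sum decomposition of $H^1(X,\,\text{ad}(E_M)\otimes K_X)$ in which $\overline{\rho}$ is the inclusion of the first factor. In particular $\overline{\rho}$ admits a retraction, hence is injective, which completes the argument.

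The main substantive point—beyond routine diagram chasing with \eqref{comA} and \eqref{cdc}—is the use of reductivity to produce the $H$-module complement $\mathfrak{n}$ of $\mathfrak{h}$ in $\mathfrak{m}$; without reductivity of $H$ (rather than of $M$), one only knows that $\mathfrak{h}$ is a Lie subalgebra, not that $\alpha$ admits a holomorphic retraction, and the cohomological injectivity need not hold.
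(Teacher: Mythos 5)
Your argument is correct, and it rests on exactly the same key input as the paper's proof: since $H$ is reductive, the $H$--module $\mathfrak m$ (with $H$ acting through $\mathrm{Ad}_M\circ\rho$) decomposes as $d\rho(\mathfrak h)\oplus V$ for an $H$--submodule $V$, and this yields a holomorphic retraction $\widehat{\eta}\,:\,\mathrm{ad}(E_M)\,\longrightarrow\,\mathrm{ad}(E_H)$ of the homomorphism $\alpha$ in \eqref{alpha}. The two proofs differ only in how that retraction is exploited. The paper applies it directly: writing a logarithmic connection on $E_M$ as a splitting $\theta\,:\,\mathrm{At}(E_M,D)\,\longrightarrow\,\mathrm{ad}(E_M)$, it exhibits $\widehat{\eta}\circ\theta\circ A$ explicitly as the desired connection on $E_H$, with the residue statement read off immediately from $\widehat{\eta}\circ\alpha\,=\,\mathrm{Id}$. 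You instead pass to the obstruction classes of \eqref{ext-cls}: the same retraction shows $\overline{\rho}$ is a split injection on $H^1(X,\,\mathrm{ad}(-)\otimes K_X)$, so $\overline{\rho}(\beta)\,=\,0$ forces $\beta\,=\,0$, and Lemma \ref{lem0} (via the discussion preceding Corollary \ref{cor1}) produces the connection. Both routes are valid; the paper's version has the minor advantage of giving the induced connection by an explicit formula rather than an abstract existence statement, while yours isolates cleanly that the only role of reductivity is to split $\alpha$ at the sheaf level. Your closing observation that reductivity of $H$, not of $M$, is the operative hypothesis is exactly right and consistent with the paper.
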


\begin{proof}
The adjoint action of $H$ on $\mathfrak h$ makes it an $H$--module. On the other hand,
the homomorphism $\rho$ composed with the adjoint action of $M$ on $\mathfrak m$ produces
an action of $H$ on $\mathfrak m$. The injective homomorphism $d\rho$ in \eqref{df} is
a homomorphism of $H$--modules. Since $H$ is reductive, there is an $H$--submodule
$V\, \subset\, \mathfrak m$ which is a complement of $d\rho({\mathfrak h})$, meaning
$$
{\mathfrak m}\,=\, d\rho({\mathfrak h})\oplus V\, .
$$
Let $\eta\, :\, {\mathfrak m}\,\longrightarrow\, {\mathfrak h}$ be the projection
constructed from this decomposition of $\mathfrak m$; in particular, we have
$\eta\circ d\rho\,=\, \text{Id}_{\mathfrak h}$.

Since the above $\eta$ is a homomorphism of $H$--modules, it produces a projection
$$
\widehat{\eta}\, :\, \text{ad}(E_M)\, \longrightarrow\, \text{ad}(E_H) 
$$
such that $\widehat{\eta}\circ\alpha\,=\, \text{Id}_{\text{ad}(E_H)}$, where $\alpha$ is
the homomorphism in \eqref{alpha}.

Now if $\theta\, :\, \text{At}(E_M,\, D)\, \longrightarrow\, \text{ad}(E_M)$
is a logarithmic connection on $E_M$ singular over $D$
with residue $\alpha(x)(w_x)$ at each $x\, \in\, D$, consider the composition
$$
\widehat{\eta}\circ\theta\circ A\, :\, \text{At}(E_H,\, D)\, \longrightarrow\,
\text{ad}(E_H)\, ,
$$
where $A$ is constructed in \eqref{A}. Evidently, it is a
logarithmic connection on $E_H$ singular over $D$ with residue $w_x$
at each $x\, \in\, D$.
\end{proof}

\section{$T$--rigid elements of adjoint bundle}\label{se4}

\subsection{Definition}\label{se4.1}

As before, $H$ is a complex affine algebraic group and $p\, :\, E_H\, \longrightarrow\, X$ 
a holomorphic principal $H$--bundle on $X$. An automorphism of $E_H$ is a holomorphic
map $F\, :\, E_H\, \longrightarrow\, E_H$ such that
\begin{itemize}
\item $p\circ F\,=\, p$, and

\item $F(zh)\,=\, F(z)h$ for all $z\, \in\, E_H$ and $h\, \in\, H$.
\end{itemize}
Let $\text{Aut}(E_H)$ be the group of all automorphisms of $E_H$. We will show that
$\text{Aut}(E_H)$ is a complex affine algebraic group.

First consider the case of $H\,=\, \text{GL}(r,{\mathbb C})$. For a holomorphic principal 
$\text{GL}(r,{\mathbb C})$--bundle $E_{\rm GL}$ on $X$, let $E\,:=\, E_{\rm GL}\times^{\text{GL}( 
r,{\mathbb C})} {\mathbb C}^r$ be the holomorphic vector bundle of rank $r$ on $X$ associated to 
$E_{\rm GL}$ for the standard action of $\text{GL}(r,{\mathbb C})$ on ${\mathbb C}^r$. Then 
$\text{Aut}(E_{\rm GL})$ is identified with the group of all holomorphic automorphisms $\text{Aut} 
(E)$ of the vector bundle $E$ over the identity map of $X$. Note that $\text{Aut}(E)$ is the 
Zariski open subset of the complex affine space $H^0(X,\, \text{End}(E))$ consisting of all global 
endomorphisms $f$ of $E$ such that $\det (f(x_0))\, \not=\, 0$ for a fixed point $x_0\, \in \, X$; 
since $x\, \longmapsto\, \det (f(x))$ is a holomorphic function on $X$, it is in fact a constant 
function. Therefore, $\text{Aut}(E_{\rm GL})$ is an affine algebraic variety over $\mathbb C$.

For a general $H$, fix an algebraic embedding $\rho\, :\, H\, \hookrightarrow\, 
\text{GL}(r,{\mathbb C})$ for some $r$. For a holomorphic principal $H$--bundle 
$E_H$ on $X$, let $E_{\rm GL}\,:=\, E_H\times^\rho\, \text{GL}(r,{\mathbb C})$ be 
the holomorphic principal $\text{GL}(r,{\mathbb C})$--bundle on $X$ obtained by extending 
the structure group of $E_H$ using $\rho$. The injective homomorphism $\rho$ 
produces an injective homomorphism
$$
\rho'\, :\, \text{Aut}(E_H)\, \longrightarrow\, \text{Aut}(E_{\rm GL})\, .
$$
The image of $\rho'$ is Zariski closed in the
algebraic group $\text{Aut}(E_{\rm GL})$. Hence $\rho'$ produces the structure of a 
complex affine algebraic group on $\text{Aut}(E_H)$. This structure 
of a complex algebraic group is independent 
of the choices of $r$, $\rho$. Therefore, $\text{Aut}(E_H)$ is an affine algebraic 
group. Note that $\text{Aut}(E_H)$ need not be connected, although the automorphism
group of a holomorphic vector bundle is always connected (as it is a Zariski open subset
of a complex affine space).

The Lie algebra of $\text{Aut}(E_H)$ is $H^0(X, \, \text{ad}(E_H))$. The group $\text{Aut}(E_H)$ 
acts on any fiber bundle associated to $E_H$. In particular, $\text{Aut}(E_H)$ acts on the adjoint 
vector bundle $\text{ad}(E_H)$. This action evidently preserves the Lie algebra structure on the 
fibers of $\text{ad}(E_H)$.

Let $\text{Aut}(E_H)^0\, \subset\, \text{Aut}(E_H)$ be the connected component
containing the identity element. Fix a maximal torus
$$
T\, \subset\, \text{Aut}(E_H)^0\, .
$$
An element $w\, \in\, \text{ad}(E_H)_x$, where $x\, \in\, X$, will be called
$T$--\textit{rigid} if the action of $T$ on $\text{ad}(E_H)_x$ fixes $w$.

Consider the adjoint action of $H$ on itself. Let
\begin{equation}\label{gs}
\text{Ad}(E_H)\,:=\, E_H\times^H H\, \longrightarrow\, X
\end{equation}
be the associated holomorphic fiber bundle. Since this adjoint action preserves the group structure 
of $H$, the fibers of $\text{Ad}(E_H)$ are complex algebraic groups isomorphic to $H$. More 
precisely, each fiber of $\text{Ad}(E_H)$ is identified with $H$ uniquely up to an inner 
automorphism of $H$. The corresponding Lie algebra bundle on $X$ is $\text{ad}(E_H)$.

The group $\text{Aut}(E_H)$ is the space of all holomorphic sections of $\text{Ad}(E_H)$.
For any $x\, \in\, X$, the action of $\text{Aut}(E_H)$ on the fiber $\text{ad}(E_H)_x$
coincides with the one obtained via the composition
$$
\text{Aut}(E_H)\, \stackrel{{\rm ev}_x}{\longrightarrow}\, \text{Ad}(E_H)_x
\, \stackrel{\rm ad}{\longrightarrow}\, {\rm Aut}(\text{ad}(E_H)_x)\, ,
$$
where ${\rm ev}_x$ is the evaluation map that sends a section of $\text{Ad}(E_H)$ to its
evaluation at $x$, and ${\rm ad}$ is the adjoint action of the group $\text{Ad}(E_H)_x$ on its
Lie algebra $\text{ad}(E_H)_x$. 

Therefore, an element $w\, \in\, \text{ad}(E_H)_x$ is
$T$--rigid if and only if the adjoint action of ${\rm ev}_x(T)\, \subset\,
\text{Ad}(E_H)_x$ on $\text{ad}(E_H)_x$ fixes $w$.

\subsection{Examples}\label{ex} The center of $H$ will be denoted by $Z_H$. Let $E_H$ be
a holomorphic principal $H$--bundle on $X$. Since $Z_H$ commutes
with $H$, for any $t\, \in\, Z_H$, the map $E_H\,\longrightarrow\, E_H$, $z\, \longmapsto\,
zt$ is $H$--equivariant. Therefore, we have $Z_H\, \subset\, \text{Aut}(E_H)$. The principal
$H$--bundle $E_H$ is called \textit{simple} if $Z_H\,=\,\text{Aut}(E_H)$.
Note that if $E_H$ is simple then every element of $\text{ad}(E_H)$ is $T$--rigid, where $T$
is any maximal torus in $\text{Aut}(E_H)^0$.

Let $H$ be connected reductive, and let $E_H$ be stable. Then $Z_H$ is a finite index
subgroup of $\text{Aut}(E_H)$. Therefore, any maximal torus of $\text{Aut}(E_H)^0$ is
contained in $Z_H$. This implies that every element of $\text{ad}(E_H)$ is $T$--rigid, where $T$
is any maximal torus in $\text{Aut}(E_H)^0$.

Take $E_H$ to be the trivial holomorphic principal $H$--bundle $X\times H$. Then the 
left--translation action of $H$ identifies $H$ with $\text{Aut}(E_H)$. Also, $\text{ad}(E_H)$ is 
the trivial vector $X\times\mathfrak h$, where $\mathfrak h$ is the Lie algebra of $H$. Let $T$ be 
a maximal torus of $H\,=\, \text{Aut}(E_H)$. Then an element $v\,\in\, {\mathfrak h}\,=\, 
\text{ad}(E_H)_x$ is $T$--rigid if and only if $v\,\in\, \text{Lie}(T)$.

\section{A criterion for logarithmic connections with given residue}\label{se5}

\subsection{Logarithmic connections with $T$--rigid residue}

As in Section \ref{se2.1}, $G$ is a connected reductive affine algebraic group defined over 
$\mathbb C$. Let $E_G$ be a holomorphic principal $G$--bundle over $X$. Fix a maximal
torus
$$
T\, \subset\, \text{Aut}(E_G)^0\, ,
$$
where $\text{Aut}(E_G)^0$ as before is the connected component containing the identity 
element of the group of automorphisms of $E_G$. 

We now recall some results from \cite{BBN}, \cite{BP}.

As in \eqref{gs}, define the adjoint bundle $\text{Ad}(E_G)\, =\, E_G\times^G G$. 
For any point $y\, \in\, X$, consider the evaluation homomorphism
$$
\varphi_y\, :\, T\, \longrightarrow\, \text{Ad}(E_G)_y\, ,\ \ s\, \longmapsto\, s(y)\, .
$$
Then $\varphi_y$ is injective and its image is a torus in $G$ \cite[p.~230,
Section~3]{BBN}. Since
$G$ is identified with $\text{Ad}(E_G)_y$ uniquely up to an inner automorphism, the image
$\varphi_y(T)$ determines a conjugacy class of tori in $G$; this conjugacy class is
independent of the choice of $y$ \cite[p.~230, Section~3]{BBN},
\cite[p.~63, Theorem~4.1]{BP}. Fix a torus
\begin{equation}\label{tg}
T_G\, \subset\, G
\end{equation}
in this conjugacy class of tori. The centralizer
\begin{equation}\label{cz}
H\, :=\, C_G(T_G)\, \subset\, G
\end{equation}
of $T_G$ in $G$ is a Levi factor of a parabolic subgroup of $G$ \cite[p.~230, 
Section~3]{BBN}, \cite[p.~63, Theorem~4.1]{BP}. The principal $G$--bundle 
$E_G$ admits a holomorphic reduction of structure group
\begin{equation}\label{re}
E_H\, \subset\, E_G
\end{equation}
to the above subgroup $H$ \cite[p.~230, Theorem~3.2]{BBN}, \cite[p.~63,
Theorem~4.1]{BP}. Since $T_G$ is in the center of $H$, the action of $T_G$ on
$E_H$ commutes with the action of $H$, so $T_G\,\subset\, \text{Aut}^0(E_H)$
(this was noted in Section \ref{ex}). The image of $T_G$ in $\text{Aut}^0(E_H)$
coincides with $T$. This reduction $E_H$ is minimal in the sense that there is no
Levi factor $L$ of some parabolic subgroup of $G$ such that
\begin{itemize}
\item $L\, \subsetneq\, H$, and

\item $E_G$ admits a holomorphic reduction of structure group to $L$.
\end{itemize}
(See \cite[p.~230, Theorem~3.2]{BBN}.)

The above reduction $E_H$ is unique in the following sense. Let $L$ be a Levi factor of a 
parabolic subgroup of $G$ and $E_L\, \subset\, E_G$ a holomorphic reduction of structure group to 
$L$ satisfying the condition that $E_G$ does not admit any holomorphic reduction of structure 
group to a Levi factor $L'$ of some parabolic subgroup of $G$ such that $L'\,\subsetneq\, L$.
Then there is an automorphism $\varphi\, \in\, \text{Aut}(E_G)^0$ such 
that $E_L\,=\, \varphi(E_H)$ \cite[p.~63, Theorem~4.1]{BP}. In particular, the 
subgroup $L\, \subset\, G$ is conjugate to $H$.

The Lie algebras of $G$ and $H$ will be denoted by $\mathfrak g$ and $\mathfrak h$ respectively. 
The inclusion of $\mathfrak h$ in $\mathfrak g$ and the reduction in \eqref{re} together produce 
an inclusion $\text{ad}(E_H)\, \hookrightarrow\, \text{ad}(E_G)$.
This subbundle $\text{ad}(E_H)$ of $\text{ad}(E_G)$ coincides with the
invariant subbundle $\text{ad}(E_G)^T$
for the action of $T$ on $\text{ad}(E_G)$ \cite[p.~230, Theorem~3.2]{BBN}, 
\cite[p.~61, Proposition~3.3]{BP} (this action is explained in Section \ref{se4.1}),
in other words,
\begin{equation}\label{re2}
\text{ad}(E_H) \,=\, \text{ad}(E_G)^T\, \subset\, \text{ad}(E_G)\, .
\end{equation}

For every $x\, \in\, D$ fix a $T$--rigid element
\begin{equation}\label{wx}
w_x\, \in\, \text{ad}(E_G)_x
\end{equation}
(see Section \ref{se4.1}). Since each $w_x$ is $T$--rigid, from \eqref{re2} we
conclude that
\begin{equation}\label{re3}
w_x\, \in\, \text{ad}(E_H)_x\ \ \ \forall\ x\,\in\, D\, .
\end{equation}
So $w_x$ determines a conjugacy class in $\mathfrak h$. For any character
$\chi$ of $H$, the corresponding homomorphism of Lie algebras
$d\chi\, :\, {\mathfrak h}\, \longrightarrow\, \mathbb C$ factors through the
conjugacy classes in $\mathfrak h$, because $\mathbb C$ is abelian. Therefore, we have
$d\chi(w_x)\, \in\, \mathbb C$.

\begin{theorem}\label{thm1}
There is a logarithmic connection on $E_G$ singular over $D$, and with
$T$--rigid residue $w_x$ at every $x\, \in\, D$ (see \eqref{wx}), if and only if
\begin{equation}\label{si}
{\rm degree}(E_H(\chi))+\sum_{x\in D} d\chi(w_x)\,=\, 0
\end{equation}
for every character $\chi$ of $H$, where $E_H(\chi)$ is the holomorphic
line bundle on $X$ associated to $E_H$ for $\chi$, and $d\chi$ is the homomorphism of
Lie algebras corresponding to $\chi$.
\end{theorem}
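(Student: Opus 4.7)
The plan is to prove the two implications separately. The necessity direction is a short chain of the lemmas already at hand: given a logarithmic connection $\theta$ on $E_G$ with residue $w_x$, I apply Lemma \ref{le-2} with $M = G$ and $\rho$ the inclusion $H \hookrightarrow G$. Its hypotheses are met because $H$, being a Levi factor of a parabolic of the reductive group $G$, is itself reductive, and because \eqref{re3} together with \eqref{re2} identifies $\alpha(w_x)$ with $w_x$ under $\alpha \colon \text{ad}(E_H) \hookrightarrow \text{ad}(E_G)$ from \eqref{alpha}. The lemma then produces a logarithmic connection on $E_H$ with residue $w_x$, and Lemma \ref{lem1} applied to $E_H$ delivers \eqref{si} for every character $\chi$ of $H$.

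For the sufficiency direction, Lemma \ref{lem2} with the same $M = G$ reduces the problem to producing a logarithmic connection on $E_H$ with residue $w_x$, which by Lemma \ref{lem0} amounts to showing that the extension class $\beta \in H^1(X, \text{ad}(E_H) \otimes K_X)$ of \eqref{ext-cls} vanishes. My plan is to exploit the reductive decomposition $\mathfrak{h} = \mathfrak{z} \oplus \mathfrak{h}'$, with $\mathfrak{z}$ the centre and $\mathfrak{h}' = [\mathfrak{h},\mathfrak{h}]$ the derived subalgebra, to split $\text{ad}(E_H) = V_{\mathfrak{z}} \oplus V_{\mathfrak{h}'}$ (with $V_{\mathfrak{z}} \cong X \times \mathfrak{z}$ trivial since $H$ acts trivially on its centre) and correspondingly $\beta = \beta_{\mathfrak{z}} + \beta_{\mathfrak{h}'}$. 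The abelian piece $\beta_{\mathfrak{z}}$ is the obstruction to a logarithmic connection on the torus bundle $E_H/[H,H]$ with residue equal to the projection of $w_x$ to $\mathfrak{z}$; since any principal torus bundle decomposes as a product of line bundles indexed by its characters, the classical residue-degree formula for line bundles (\cite[Lemma 2.3]{BDP}, already invoked in the paragraph preceding Lemma \ref{lem1}) identifies $\beta_{\mathfrak{z}} = 0$ with exactly the degree condition \eqref{si} evaluated on every character of $H$.

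The main obstacle is the vanishing of the semisimple piece $\beta_{\mathfrak{h}'}$. My plan is to use the minimality of the BBN reduction from \cite[Theorem~3.2]{BBN}: any reduction of $E_H$ to a proper Levi factor $L \subsetneq H$ would, via $L \subset H \subset G$, yield a reduction of $E_G$ to a proper Levi contained in $H$, contradicting that minimality. Hence $E_H$ admits no such reduction, and the associated $H/Z(H)^0$-bundle likewise has no reduction to a proper parabolic; by the Atiyah-Bott criterion \cite{AB} it therefore carries a holomorphic connection, which disposes of the $w_x = 0$ part of $\beta_{\mathfrak{h}'}$. To incorporate the prescribed semisimple residues $w_x^{\mathfrak{h}'}$ I plan to modify this holomorphic connection by a meromorphic section of $V_{\mathfrak{h}'} \otimes K_X(D)$ with the required residues; Serre duality converts the existence of such a section into the vanishing of a pairing with $H^0(X, V_{\mathfrak{h}'})$, and the key technical point will be to establish this vanishing using the $T$-rigidity of the $w_x$, the trivial $T$-action on $\text{ad}(E_H)$ from \eqref{re2}, and Corollary \ref{cor1} applied to suitable auxiliary reductions of $E_H$.
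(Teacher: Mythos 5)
Your necessity direction and the first half of your sufficiency direction track the paper closely: the paper also passes to $E_H$ via Lemma \ref{le-2} and Lemma \ref{lem1}, and its Proposition \ref{prop1} uses exactly your decomposition (in the form $H \to S \times Z$ with $S = H/T_G$ semisimple and $Z = H/[H,H]$ a torus, which at the Lie algebra level is your $\mathfrak{h} = \mathfrak{z} \oplus \mathfrak{h}'$), handling the abelian piece by the residue--degree criterion for line bundles just as you propose.

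The genuine gap is in the semisimple piece, and it is precisely the step you label ``the key technical point will be to establish this vanishing.'' That vanishing is the heart of the whole theorem and nothing you have written supplies it. Concretely: after Serre duality (via the Killing form) the obstruction becomes a linear functional on $H^0(X, {\rm ad}(E_S))$, and you must show it kills every global section $\gamma$. $T$--rigidity cannot do this --- it has already been spent in placing $w_x$ inside ${\rm ad}(E_H)_x$, and the projection $w^s_x$ is then an arbitrary element of ${\rm ad}(E_S)_x$, whose Killing pairing with $\gamma(x)$ has no reason to vanish in general. The mechanism the paper uses, and which is missing from your plan, is: (i) since $T$ is a maximal torus of ${\rm Aut}(E_H)^0$ and $T$ is the image of the connected centre $T_G$ of $H$, the quotient bundle $E_S$ has trivial maximal torus in ${\rm Aut}(E_S)^0$, so \emph{every} holomorphic section $\gamma$ of ${\rm ad}(E_S)$ is nilpotent; (ii) following \cite[Proposition 3.9]{AB}, a nonzero nilpotent section canonically determines a parabolic reduction $E_P \subset E_S$ with $\gamma$ taking values in the nilradical of ${\rm ad}(E_P)$; (iii) the obstruction class is represented by an ${\rm ad}(E_P)$--valued form (Corollary \ref{cor1}), and the nilradical is the Killing--orthogonal complement of the parabolic subalgebra, so the pairing vanishes pointwise. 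Your phrase ``Corollary \ref{cor1} applied to suitable auxiliary reductions'' gestures at (iii) but you never identify the nilpotency statement (i) or the reduction construction (ii), without which there are no such reductions to apply it to. A secondary, reparable slip: your inference from ``no reduction of $E_H$ to a proper Levi factor'' to ``no reduction of the quotient bundle to a proper parabolic'' is false as stated (a parabolic reduction need not refine to a Levi reduction); the \cite{AB} criterion only needs the Levi statement, and in any case the paper dispenses with this detour by treating the full obstruction class, residues included, in one Serre-duality computation.
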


\begin{proof}
Assume that there is a logarithmic connection on $E_G$ singular over $D$ such that the residue at 
each $x\, \in\, D$ is $w_x$. Since the group $H$ is reductive, from Lemma \ref{le-2} it follows 
that $E_H$ admits a logarithmic connection singular over $D$ such that the residue at each 
$x\,\in\, D$ is $w_x$ (see \eqref{re3}). Now from Lemma \ref{lem1} we know that \eqref{si}
holds for every character $\chi$ of $G$.

To prove the converse, assume that \eqref{si} holds for every character $\chi$ of $H$. We will show 
that $E_G$ admits a logarithmic connection singular over $D$ such that the residue at each $x\, 
\in\, D$ is $w_x$.

Since $E_G$ is the extension of structure group of $E_H$ using the inclusion of 
$H$ in $G$ (see \eqref{re}), a logarithmic connection on $E_H$ induces a 
logarithmic connection on $E_G$ (this is explained in Section \ref{se2.4}). 
Therefore, in view of \eqref{res2} and \eqref{re3}, the
following proposition completes the proof of the theorem.

\begin{proposition}\label{prop1}
There is a logarithmic connection on $E_H$ singular over $D$ such that
the residue over any $x\, \in\, D$ is $w_x\, \in\, {\rm ad}(E_H)_x$.
\end{proposition}

\begin{proof}
The connected component of the center of $H$ containing the identity element coincides
with $T_G$ in \eqref{tg}. Define the quotient groups
$$
S\, :=\, H/T_G\, ,\ \ \ Z\,:=\, H/[H,\, H]\, .
$$
So $S$ is semisimple, and $Z$ is a torus. The projections of $H$ to $S$ and $Z$ 
will be denoted by $p_S$ and $p_Z$ respectively. Let $E_S$ (respectively, $E_Z$)
be the principal $S$--bundle (respectively, $Z$--bundle) on $X$ obtained by extending
the structure group of $E_H$ using $p_S$ (respectively, $p_Z$). Consider the homomorphism
\begin{equation}\label{vp}
\varphi\, :\, H\, \longrightarrow\, S\times Z\,, \ \ \ h\,\longmapsto\,
(p_S(h),\, p_Z(h))\, .
\end{equation}
It is surjective with finite kernel, hence it induces an isomorphism of Lie algebras.
Let $E_{S\times Z}$ be the principal $S\times Z$--bundle on $X$ obtained by extending
the structure group of $E_H$ using $\varphi$. Note that $E_{S\times Z}\, \cong\, E_S\times_X E_Z$.
Since $\varphi$ induces an isomorphism of Lie algebras, we have
\begin{equation}\label{eq-5.2.1}
 \text{ad}(E_H)\,=\, \text{ad}(E_{S\times Z})\,=\, \text{ad}(E_S)\oplus \text{ad}(E_Z)
\end{equation}
and
$$
\text{At}(E_H)\,=\, \text{At}(E_{S\times Z})\, , \ \ \
{\mathcal A}\,=\, \mathcal{A}_{E_{S \times Z}}\, ,
$$
where $\mathcal{A}_{E_{S \times Z}}$ is constructed as in \eqref{cA} for $(E_{S \times Z},\, 
\{w_x\}_{x\in D})$ (see \eqref{eq-5.2.1}). Consequently, $E_H$ admits a logarithmic connection 
singular over $D$, with residue $w_x$ for all $x\, \in\, D$, if and only if $E_{S\times Z}$ admits 
a logarithmic connection singular over $D$ with residue $w_x$ for all $x\, \in\, D$.

For $x\,\in\, D$, let
$$
w_x\,=\, w^s_x\oplus w^z_x\, , \ \ \ w^s_x\, \in\, \text{ad}(E_S)_x\, ,\ \
w^z_x\, \in\, \text{ad}(E_Z)_x
$$
be the decomposition given by \eqref{eq-5.2.1}. Consider the short exact sequences
\begin{equation}\label{ex-1}
 0 \longrightarrow {\rm ad}(E_S) \otimes \mathcal{O}_X(-D) \longrightarrow \mathcal{A}_{E_S} \stackrel{\sigma_{1,S}}{\longrightarrow} \mathrm{T}X(-D) \longrightarrow 0
\end{equation}
and 
\begin{equation}\label{ex-2}
 0 \longrightarrow {\rm ad}(E_Z) \otimes \mathcal{O}_X(-D) \longrightarrow \mathcal{A}_{E_Z} \stackrel{\sigma_{1,Z}}{\longrightarrow} \mathrm{T}X(-D) \longrightarrow 0\,,
\end{equation}
as in Lemma \ref{lem0} for the data $(E_S,\, \{w_x^s\}_{x \in D})$ and $(E_Z,\, \{w_x^z\}_{x\in D})$ respectively.
Let
$$q_S \,:\, \mathcal{A}_{E_S} \oplus \mathcal{A}_{E_Z} \,\longrightarrow \,\mathcal{A}_{E_S}\, ,\ \ 
q_Z \,:\, \mathcal{A}_{E_S} \oplus \mathcal{A}_{E_Z} \,\longrightarrow\, \mathcal{A}_{E_Z}$$
be the projections. Note that
$$
\mathcal{A}_{E_S} \oplus \mathcal{A}_{E_Z}\, \supset\,
\text{kernel}(\sigma_{1,S} \circ q_S - \sigma_{1,Z}\circ q_Z)\, =\,
\mathcal{A}_{E_S} \times_{TX(-D)} \mathcal{A}_{E_Z}\, =\, \mathcal{A}_{E_{S \times Z}}\, .$$

Therefore, giving a holomorphic splitting of the exact sequence
$$
0 \,\longrightarrow\, {\rm ad}(E_{S \times Z}) \otimes \mathcal{O}_X(-D)
\,\longrightarrow\, \mathcal{A}_{E_{S \times Z}} \,\longrightarrow\, TX(-D)\,\longrightarrow\, 0
$$
(see Lemma \ref{lem0} for it) is equivalent to giving holomorphic splittings of
both \eqref{ex-1} and \eqref{ex-2}. Consequently, $E_{S\times Z}$ admits a logarithmic connection
singular over $D$ with residue $w_x$ over every $x\, \in\, D$ if and only if both
$E_{S}$ and $E_Z$ admit logarithmic connections singular over $D$ such that
their residues over any $x\, \in\, D$ are $w^s_x$ and $w^z_x$ respectively.

Consider the homomorphism of character groups $\text{Hom}(Z,\, {\mathbb 
G}_m)\,\longrightarrow\, \text{Hom}(H,\, {\mathbb G}_m)$ given by the projection 
$p_Z$. It is an isomorphism because being semisimple $[H,\, H]$ does not admit any nontrivial 
character. Since $E_Z \,=\, E_H \times^{p_Z} Z$, for any character $\chi \,\in\, 
\text{Hom}(Z,\, \mathbb{G}_m)$, the holomorphic line bundle $E_Z(\chi) \,=\, E_Z \times^{\chi} 
\mathbb{C}$ is identified with the holomorphic line bundle $E_H(\chi\circ p_Z)$. 

A holomorphic line bundle $L$ on $X$ admits a logarithmic connection singular over
$D$ with residue $\lambda_x\, \in\, \mathbb C$ for every $x\, \in\, D$ if and only if
$$
\text{degree}(L)+\sum_{x\in D} \lambda_x\,=\, 0
$$
(see \cite[Lemma 2.3]{BDP}). Since $Z = H/[H,\, H]\,=\, (\mathbb{G}_m)^d$ for some $d$, it follows 
that $E_Z$ admits a logarithmic connection singular over $D$ with residue $w_x^z$ at each $x 
\,\in\, D$ if and only if for each $1\, \leq\, i\, \leq\, d$, the line bundle $E_Z(\pi_i)$ admits a 
logarithmic connection singular over $D$ with residue ${\rm d}\pi_i(w_x^z)$ at each $x \in D$, 
where $\pi_i\,:\, Z\,=\,(\mathbb{G}_m)^d\,\longrightarrow\, {\mathbb G}_m$ is the projection
to the $i$-th factor. From this and the given condition in \eqref{si} we conclude that $E_Z$
admits logarithmic connection singular over $D$ with residue $w^z_x$ for all $x\,\in\, D$.

To complete the proof of the proposition we need to show that $E_S$ admits logarithmic connection 
singular over $D$ such that the residues over each $x\, \in\, D$ is $w^s_x$. We will show that 
\eqref{ex-1} splits holomorphically.

Let
\begin{equation}\label{beta}
\beta\, \in\, H^1(X,\, \text{Hom}(\mathrm{T}X(-D),\,
{\rm ad}(E_S)\otimes{\mathcal O}_X(-D)))\,=\, H^1(X,\, {\rm ad}(E_S)\otimes K_X)
\end{equation}
be the extension class for \eqref{ex-1} as in \eqref{ext-cls}.
The exact sequence in \eqref{ex-1} splits holomorphically if and only if
\begin{equation}\label{beta2}
\beta\,=\, 0\, .
\end{equation}

The Lie algebra of $S$ will be denoted by $\mathfrak s$.
Consider $\mathfrak s$ as a $S$--module using
the adjoint action of $S$ on $\mathfrak{s}$.
Since $S$ is semisimple, the Killing form
$$
\kappa \,:\, \mathfrak{s} \times \mathfrak{s} \,\longrightarrow\, \mathbb{C}\, , \ \ 
~ (v,\, w) \,\longmapsto\, \text{trace}({\rm ad}_v \circ {\rm ad}_w)\,,
$$
is nondegenerate, where ${\rm ad}_u(u') \,:=\, [u,\, u']$. Therefore, the Killing form
induces an isomorphism ${\mathfrak s}\, \stackrel{\sim}{\longrightarrow}\, {\mathfrak s}^*$ of
$S$--modules. This isomorphism produces a holomorphic isomorphism of ${\rm ad}(E_S)$ with the
dual vector bundle ${\rm ad}(E_S)^*$. Now Serre duality gives
$$
H^1(X,\, {\rm ad}(E_S)\otimes K_X)\,=\, H^0(X,\, {\rm ad}(E_S)^*)^*\,=\,
H^0(X,\, {\rm ad}(E_S))^*\, .
$$
Let
$$
\beta'\,\in\, H^0(X,\, {\rm ad}(E_S))^*
$$
be the element corresponding to $\beta$ (defined in \eqref{beta}) by the
above isomorphism. Then 
\begin{equation}\label{eq-1}
 \beta'(\gamma) \,=\, \int_X \kappa(\widehat{\beta},\, \gamma)\, ,\ \ \forall\ 
\gamma \,\in \, H^0(X,\, {\rm ad}(E_S))\,,
\end{equation}
where $\widehat{\beta}$ is an ${\rm ad}(E_S)$--valued $(1,\,1)$--form on $X$ 
which represents the cohomology class $\beta$ using the Dolbeault isomorphism.

As before, $\text{Aut}(E_H)^0\, \subset\, \text{Aut}(E_H)$ (respectively, $\text{Aut}(E_G)^0\, 
\subset\, \text{Aut}(E_G)$) is the connected component containing the identity element, and $T 
\,\subset\, \text{Aut}(E_G)^0$ is the fixed maximal torus. Since $T$ is abelian, from \eqref{re2} it 
follows immediately that $$T\, \subset\, \text{Aut}(E_H)^0\, \subset\, \text{Aut}(E_G)^0\, .$$ 
Therefore, the maximal torus $T\, \subset\, \text{Aut}(E_G)^0$ (see \eqref{tg}) is also a maximal 
torus of $\text{Aut}(E_H)^0$. Since $T_G$ is the connected component, containing the identity 
element, of the center of $H$, and $T$ is the image of $T_G$ in $\text{Aut}(E_H)^0$,
it now follows that the maximal torus of $\text{Aut}(E_S)^0$ is trivial. 
Hence every holomorphic section of $\text{ad}(E_S)$ is nilpotent.

Take any nonzero element $\gamma\, \in\, H^0(X,\, {\rm ad}(E_S))$. Following the proof 
of \cite[Proposition 3.9]{AB}, using $\gamma$ we construct a 
holomorphic reduction of the structure group of $E_S$ to a parabolic 
subgroup of $S$ as follows. For each $x \,\in\, X$, since $\gamma(x)
\,\in \, {\rm ad}(E_S)_x$ is nilpotent, there is a parabolic Lie subalgebra 
$\mathfrak{p}_x \,\subset \,{\rm ad}(E_S)_x$ canonically associated to 
$\gamma(x)$ \cite[p.~340, Lemma~3.7]{AB}. Exponentiating $\mathfrak{p}_x$ 
we get a proper parabolic subgroup $P_x \,\subset\, {\rm Ad}(E_S)_x$ associated 
to $\gamma(x)$. Since there are only finitely many conjugacy classes of nilpotent 
elements of $\mathfrak{s}$, and the algebraic subvariety of $\mathfrak{s}$ defined 
by the nilpotent elements has a natural filtration defined using the adjoint action of $S$, 
there is a finite subset $C \,\subset\, X$ such that the conjugacy classes of $P_x$,
$x \,\in\, X \setminus C$, coincide. Fix a parabolic subgroup $P\, \subset\, S$
in this conjugacy class.

For any $x\, \in\, X \setminus C$, consider the projection map
$$
\xi_x \,:\, (E_S)_x \times S \,\longrightarrow \,{\rm Ad}(E_S)_x \,, \ \ (z,\, s)
\,\longmapsto\, \widetilde{(z,\, s)}\, ,
$$
where $\widetilde{(z,\, s)}$ is the equivalence class of $(z,\, s)$. Define 
$$(E_P)_x \,:= \,\{z \,\in\, (E_S)_x \,\mid\, \xi_x(z,\, g)\,\in\, P_x\,,\ \forall \
g \,\in\, P\}\,.$$
For the natural action of $S$ on $(E_S)_x$, the action of
$P \,\subset\, S$ preserves $(E_P)_x$. Since $P$ is a parabolic subgroup of 
$S$, its normalizer $N_S(P)$ is $P$ itself \cite[p.~143, Corollary B]{Hu}. So the 
action of $P$ on $(E_P)_x$ is transitive (and also free, since the $G$--action 
on $(E_S)_x$ is free). Therefore, we have a holomorphic reduction of structure group
$E_P \,\subset \,E_S$ to $P \,\subset\, S$ over $X \setminus C$. This holomorphic
reduction defines a holomorphic section $\eta \,: \,X \setminus C 
\,\longrightarrow\, E_S/P$ which is meromorphic over $X$.
Since $S/P$ is a projective variety, the above section $\eta$ extends 
holomorphically to a section $\widetilde{\eta} \,: \,X \,\longrightarrow\, E_S/P$. 
This defines a holomorphic reduction of structure group $E_P \,\subset\, E_S$
to $P$.

From Corollary \ref{cor1} it follows that the cohomology class 
$\beta$ in \eqref{beta} lies in the image of the natural homomorphism 
$H^1(X,\, {\rm ad}(E_P) \otimes K_X)\, \longrightarrow\,
H^1(X,\, {\rm ad}(E_S) \otimes K_X)$. Therefore, $\beta$ is represented by an
${\rm ad}(E_P)$--valued $(1,\,1)$--form $\widehat{\beta}$ on $X$.

Now for all $x \,\in \,X \setminus C$, the element
$\gamma(x)\, \in\, {\rm ad}(E_S)_x$ lies in
the Lie algebra ${\mathfrak u}_x$ of the unipotent radical of ${\rm ad}(E_P)_x$
\cite[p.~186, Corollary A]{Hu}, and $\widehat{\beta}(x) \,\in \,\mathfrak{p}_x$. 
Therefore, $\kappa(\widehat{\beta}(x),\, \gamma(x)) \,= \,0$, since
$\mathfrak{u}_x$ is the orthogonal complement
$({\rm ad}(E_P)_x)^{\perp}$ with respect to the Killing form on
${\rm ad}(E_S)_x$. Hence from \eqref{eq-1} we have
$\beta'(\gamma)\,=\, 0$. This proves \eqref{beta2}, and 
completes the proof of the proposition.
\end{proof}

As noted before, Proposition \ref{prop1} completes the proof of Theorem
\ref{thm1}.
\end{proof}

\subsection{$T$--invariant logarithmic connections with given residue}

The automorphism group ${\rm Aut}(E_G)$ has a natural action on the space of all 
logarithmic connections on $E_G$ singular over $D$. Given a maximal torus $T\, 
\subset\, {\rm Aut}(E_G)^0$, by a $T$--invariant logarithmic connection we mean a 
logarithmic connection on $E_G$ singular over $D$ which is fixed by the action of 
$T$.

\begin{theorem}\label{thm2}
Let $E_G$ be a holomorphic principal $G$--bundle on $X$, where $G$ is reductive.
Fix $w_x\, \in\, {\rm ad}(E_G)_x$ for each $x\, \in\, D$. Fix a maximal torus
$T\, \subset\, {\rm Aut}(E_G)^0$. The following two are equivalent:
\begin{enumerate}
\item There is a $T$--invariant logarithmic connection
on $E_G$ singular over $D$ with residue $w_x$ at every $x\, \in\, D$.

\item The element $w_x$ is $T$--rigid for each $x\,\in\, D$, and
\eqref{si} holds for every character $\chi$ of $H$.
\end{enumerate}
\end{theorem}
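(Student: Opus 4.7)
The plan is to prove both implications, with the harder one being (2) $\Rightarrow$ (1). The easy direction (1) $\Rightarrow$ (2) uses that the residue map \eqref{res} is canonically defined from $R_x$ in \eqref{R}, and in particular is $\text{Aut}(E_G)$-equivariant. A $T$-invariant $\theta$ therefore has $T$-invariant residues, so each $w_x = \text{Res}(\theta, x)$ is $T$-rigid; the degree condition is then Lemma \ref{lem1}.

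For (2) $\Rightarrow$ (1) I would re-use the construction in the proof of Theorem \ref{thm1}. Since each $w_x$ is $T$-rigid, \eqref{re2} gives $w_x \in \text{ad}(E_H)_x$, so \eqref{re3} holds. Combining the degree hypothesis with Proposition \ref{prop1} produces a logarithmic connection $\theta_H$ on $E_H$ singular over $D$ with residue $w_x$ at each $x \in D$. Extending the structure group via the inclusion $H \hookrightarrow G$ as in Section \ref{se2.4} gives a logarithmic connection $\theta_G := A \circ \theta_H$ on $E_G$ whose residue at each $x \in D$ equals $w_x$ under the identification $\text{ad}(E_H)_x \subset \text{ad}(E_G)_x$. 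Thus the existence of a logarithmic connection on $E_G$ with the prescribed residues is already guaranteed by Theorem \ref{thm1}.

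The real content of Theorem \ref{thm2} is therefore the $T$-invariance of the specific connection $\theta_G$ thus constructed. I would argue this by observing that $T$ is the image in $\text{Aut}(E_H)^0$ of $T_G \subset Z(H)$ acting on $E_H$ by right translations. Since $\text{At}(E_H) = (\mathrm{T}E_H)/H$ is built by quotienting out exactly these right translations, the right action by any element of $H$ descends to the identity on $\text{At}(E_H)$; in particular $T$ acts trivially on $\text{At}(E_H)$, and hence on the subsheaf $\text{At}(E_H, D)$. By functoriality of the extension of structure group, the inclusion $\text{At}(E_H, D) \hookrightarrow \text{At}(E_G, D)$ is $T$-equivariant, so the image of $\theta_G$ is contained in a pointwise $T$-fixed subsheaf of $\text{At}(E_G, D)$. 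Combined with the trivial $T$-action on $\mathrm{T}X(-D)$, this forces $\theta_G$ to be $T$-equivariant, i.e., $T$-invariant.

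The main obstacle is precisely this $T$-invariance check: Theorem \ref{thm1} already supplies \emph{some} logarithmic connection with the required residues, and the subtlety lies in noticing that the particular connection obtained by first descending to $E_H$ and then extending structure group has its image inside the $T$-pointwise-fixed subbundle $\text{At}(E_H, D) \subset \text{At}(E_G, D)$. Once one identifies $T$ as coming from right $H$-translations, which are automatically killed by the Atiyah quotient, the $T$-invariance drops out immediately and no averaging argument over the complex torus $T$ is required.
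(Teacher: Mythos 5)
Your proposal is correct and follows essentially the same route as the paper: the forward direction uses the equivariance of the residue map together with Theorem \ref{thm1}, and the converse produces the connection on $E_H$ via Proposition \ref{prop1} and pushes it to $E_G$, with $T$--invariance coming from the fact that $T$ is induced by right translations by the central torus $T_G\,\subset\, Z_H$, which the Atiyah quotient $(\mathrm{T}E_H)/H$ kills --- this is precisely the paper's remark that the center acts trivially on the space of logarithmic connections, which you prove rather than merely assert. One small caution: to obtain \eqref{si} for all characters of $H$ (and not just of $G$) in the forward direction, you must first descend the given connection from $E_G$ to $E_H$ using Lemma \ref{le-2} before invoking Lemma \ref{lem1}, exactly as in the proof of Theorem \ref{thm1}, rather than applying Lemma \ref{lem1} to $E_G$ directly.
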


\begin{proof}
Let $\theta$ be a $T$--invariant logarithmic connection
on $E_G$ singular over $D$ with residue $w_x$ at every $x\, \in\, D$. Since
$\theta$ is $T$--invariant, its residues are also $T$--invariant. Hence
$w_x$ is $T$--rigid for each $x\,\in\, D$. From Theorem \ref{thm1} we know
that \eqref{si} holds for every character $\chi$ of $H$.

Now assume that the second statement in the theorem holds. From Theorem \ref{thm1} we know that 
there is a logarithmic connection on $E_G$ singular over $D$ with residue $w_x$ at every $x\, 
\in\, D$.

As noted in Section \ref{ex}, for a holomorphic principal $M$--bundle $E_M$ on $X$, the center 
$Z_M$ of $M$ is contained in the automorphism group $\text{Aut}(E_M)$. It is straight-forward to 
check that the action of $Z_M\, \subset\, \text{Aut}(E_M)$ on the space of all logarithmic 
connections on $E_M$ is trivial.

Since $T_G$ is contained in the center of $H$ (see \eqref{cz}), and $T$ is the image of $T_G$ in 
$\text{Aut}(E_H)$, every logarithmic connection on the principal $H$--bundle $E_H$ in Proposition 
\ref{prop1} is $T$--invariant. Consequently, from Proposition \ref{prop1} it follows that $E_G$ 
admits a $T$--invariant logarithmic connection singular over $D$ with residue $w_x$ at every $x\, 
\in\, D$.
\end{proof}

\section*{Acknowledgements}

The first author is supported by a J. C. Bose Fellowship.
The second author is supported by ERCEA Consolidator Grant $615655$-NMST and also
by the Basque Government through the BERC $2014-2017$ program and by Spanish
Ministry of Economy and Competitiveness MINECO: BCAM Severo Ochoa
excellence accreditation SEV-$2013-0323$.

%%%%%%%%%%%%%%%%%%%%%%%%%%%%%%%%%%%%%%%%%%%%%%%%%%%%%%%%%%%%%

\end{document}